\DeclareMathAlphabet{\mathbbe}{U}{bbold}{m}{n}
\newcommand{\kto}{\rightsquigarrow}
\newtheorem{thm}{Theorem}[section]
\newtheorem{lem}[thm]{Lemma}
\newtheorem{prop}[thm]{Proposition}
\newtheorem{obs}[thm]{Observation}
\theoremstyle{definition}
\newtheorem{defn}[thm]{Definition}
\newtheorem{ex}[thm]{Example}
\theoremstyle{remark}
\newtheorem*{rmk}{Remark}
\newcommand{\RR}{\mathbb{R}}
\newcommand{\cA}{\mathcal{A}}
\newcommand{\cB}{\mathcal{B}}
\newcommand{\cC}{\mathcal{C}}
\newcommand{\DDelta}{\mathbbe{\Delta}}
\newcommand{\CR}{\mathrm{cr}}
\newcommand{\defeq}{\mathrel{:=}}
\definecolor{violet}{rgb}{0.7,0,1}
\newcommand{\hey}[1]{\color{magenta}#1\color{black}~}
\begin{document}

	\title{A first step toward higher order chain rules in abelian functor calculus}
	\author[Osborne]{Christina Osborne}\thanks{C. Osborne, University of Virginia, cdo5bv@virginia.edu, 434-243-1115}
	\author[Tebbe]{Amelia Tebbe}\thanks{A. Tebbe, Indiana University Kokomo, antebbe@gmail.com}


	
	\date{\today}
	
	\begin{abstract}
		
		One of the fundamental tools of undergraduate calculus is the chain rule. 
		The notion of higher order directional derivatives was developed by Huang, Marcantognini, and Young, along with a corresponding higher order chain rule. When Johnson and McCarthy established abelian functor calculus, they proved a chain rule for functors that is analogous to the directional derivative chain rule when $n = 1$. In joint work with Bauer, Johnson, and Riehl, we defined an analogue of the iterated directional derivative and provided an inductive proof of the analogue to the chain rule of Huang et al.\\
		This paper consists of the initial investigation of the chain rule found in Bauer et al., which involves a concrete computation of the case when $n=2$.  We describe how to obtain the second higher order directional derivative chain rule for abelian functors.   This proof is fundamentally different in spirit from the proof given in Bauer et al. as it relies only on properties of cross effects and the linearization of functors.
	\end{abstract}

	\maketitle
	
	\setcounter{tocdepth}{1}
	\tableofcontents

	\section{Introduction}

	In this paper, we consider {\it abelian functor calculus}, the calculus of functors of abelian categories established by Brenda Johnson and Randy McCarthy (see \cite{JM:Deriving}).  Functor calculus enjoys certain properties that are analogous to results in undergraduate calculus.  This paper is a companion to \cite{BJORT}, in which many of these analogies are made explicit.  In \cite{BJORT}, one of the main results \cite[Theorem 8.1]{BJORT} provides a chain rule for the $n$th higher order directional derivative, denoted as $\Delta_n$ \cite[Definition 7.3]{BJORT},  associated to a functor between abelian categories.
	
	In order to arrive at this chain rule, we started with an explicit calculation in the case when $n=2$. It became clear that this method of calculation would not lend itself well to an inductive proof for a general $n$, which is why the proof of \cite[Theorem 8.1]{BJORT} is different in spirit from the approach in this paper. Because this result is quite technical and lengthy, it warranted independent documentation.  However, the groundwork - including the definitions and properties of most of the functors we will use - is already documented in \cite{BJORT}.  For this reason, we will heavily cite \cite{BJORT} throughout this paper.

	The goal of this paper is a chain rule for the second order directional derivative of a functor $F$, which is stated in the following theorem.
	{	\renewcommand{\thethm}{\ref{DirectionalChainRule}}
		\begin{thm} 
			Given two composable functors $G:\cA \rightarrow \cB$ and $F:\cB \rightarrow \cC$ with object $x$, $v$, and $w$ in $\cA$,  there is a chain homotopy equivalence  
			\begin{equation*}\Delta_2(F\circ G)(w,v;x)\simeq \Delta_2F(\Delta_2G(w,v;x),\Delta_1G(v;x);G(x)).\end{equation*}
		\end{thm}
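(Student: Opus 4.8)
The plan is to prove the equivalence by a direct computation resting on two inputs, in keeping with the claim that only cross effects and the linearization functor $D_1$ are needed: the first order directional chain rule $\Delta_1(F\circ G)(u;x)\simeq\Delta_1 F(\Delta_1 G(u;x);G(x))$ from \cite{JM:Deriving} and \cite{BJORT}, together with a second, \emph{quadratic} chain rule for the purely second order part of $\Delta_2$. The organizing principle, mirroring the classical formula $\Delta_2 f(w,v;x)=f'(x)w+f''(x)(v,v)$ of Huang, Marcantognini and Young, is that $\Delta_2$ separates into a part that is linear in the leading direction $w$ and a part that is quadratic in the direction $v$. Accordingly, my first step is to extract from Definition 7.3 of \cite{BJORT}, using additivity of $D_1$ in the relevant variable, a natural chain homotopy equivalence
\begin{equation*}\Delta_2 F(w,v;x)\simeq \Delta_1 F(w;x)\oplus \Delta_2 F(0,v;x),\end{equation*}
which exhibits this splitting with no surviving $w$--$v$ cross term. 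I would do the same for $G$, so that $\Delta_2 G(w,v;x)\simeq \Delta_1 G(w;x)\oplus\Delta_2 G(0,v;x)$.

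Granting the splitting, applying it to $F\circ G$ reduces the left-hand side to $\Delta_1(F\circ G)(w;x)\oplus\Delta_2(F\circ G)(0,v;x)$, and the first summand is dispatched immediately by the first order chain rule as $\Delta_1 F(\Delta_1 G(w;x);G(x))$. The entire content of the theorem therefore concentrates in the quadratic summand $\Delta_2(F\circ G)(0,v;x)$, for which I would aim to establish the chain homotopy equivalence
\begin{equation*}\Delta_2(F\circ G)(0,v;x)\simeq \Delta_2 F\bigl(0,\Delta_1 G(v;x);G(x)\bigr)\oplus \Delta_1 F\bigl(\Delta_2 G(0,v;x);G(x)\bigr).\end{equation*}
This is the functor-calculus shadow of the classical identity $(f\circ g)''(x)(v,v)=f''(g(x))(g'(x)v,g'(x)v)+f'(g(x))g''(x)(v,v)$, the product-rule splitting into an $f''$-term feeding on first derivatives of $g$ and an $f'$-term feeding on the second derivative of $g$.

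The main obstacle is exactly this quadratic chain rule. To prove it I would compute the second cross effect of the composite, $\CR_2(F\circ G)$, via the Fa\`a di Bruno-type decomposition for cross effects of a composite: up to natural chain homotopy equivalence, $\CR_2(F\circ G)(A,B)$ splits into the contribution of $\CR_1 F$ applied to $\CR_2 G(A,B)$, the contribution of $\CR_2 F$ applied to the pair $(\CR_1 G(A),\CR_1 G(B))$, and correction terms of cross-effect order at least three. The delicate part is to then apply the two linearizations (in $v$, diagonally, and based at $x$) and to show that the higher-order corrections are annihilated by $D_1$, that the $\CR_2 F\circ(\CR_1 G,\CR_1 G)$ block linearizes to $\Delta_2 F(0,\Delta_1 G(v;x);G(x))$ after invoking the first order chain rule on each factor, and that the $\CR_1 F\circ\CR_2 G$ block linearizes to $\Delta_1 F(\Delta_2 G(0,v;x);G(x))$. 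Keeping track of the basepoint $x$ through both linearizations, and verifying that these are genuine chain homotopy equivalences rather than mere quasi-isomorphisms, is where the concrete $n=2$ bookkeeping becomes unavoidable and where an inductive argument of the kind used for \cite[Theorem 8.1]{BJORT} would be awkward.

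Finally I would reassemble the right-hand side and match. Applying the splitting to the outer $\Delta_2 F$ gives $\Delta_2 F(\Delta_2 G(w,v;x),\Delta_1 G(v;x);G(x))\simeq \Delta_1 F(\Delta_2 G(w,v;x);G(x))\oplus \Delta_2 F(0,\Delta_1 G(v;x);G(x))$, and substituting $\Delta_2 G(w,v;x)\simeq\Delta_1 G(w;x)\oplus\Delta_2 G(0,v;x)$ together with additivity of the linearization $\Delta_1 F(-;G(x))$ splits the first term into $\Delta_1 F(\Delta_1 G(w;x);G(x))\oplus \Delta_1 F(\Delta_2 G(0,v;x);G(x))$. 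The right-hand side then displays precisely the three summands produced above for the left-hand side, so the two sides agree. Since every identification used — the splitting, the first order chain rule, the cross-effect decomposition, and the additivity of $D_1$ — is realized by an explicit natural chain homotopy equivalence, and since $D_1$ and the cross-effect functors preserve such equivalences, the composite comparison is itself a chain homotopy equivalence; the only residual care is to check that the individual homotopies splice compatibly, which requires no tool beyond those already recorded in \cite{BJORT}.
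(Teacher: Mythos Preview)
Your splitting $\Delta_2 F(w,v;x)\simeq\Delta_1 F(w;x)\oplus\Delta_2 F(0,v;x)$ is correct and is a clean way to peel off the part handled by the known first-order chain rule. But this reduction is essentially cosmetic: the ``quadratic chain rule'' you isolate is exactly the statement of the theorem at $w=0$, so all of the content still lives there, and your sketch of its proof does not go through.

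Two concrete problems. First, $\Delta_2(F\circ G)(0,v;x)$ is $D_1^1D_1^2\CR_2(F\circ G)(v,\bar v)\oplus D_1^1D_1^2\CR_3(F\circ G)(v,\bar v,x)$, and the second summand carries the vast majority of the surviving terms (twenty of the twenty-four in the paper's expansion); your outline treats only $\CR_2(F\circ G)$. Second, your claim that the ``correction terms of cross-effect order at least three'' are annihilated by $D_1$ is false. Many terms with $\CR_3 F$ and $\CR_4 F$ survive both linearizations---for instance $D_1^vD_1^{\bar v}\CR_3 F(\CR_1 G(v),\CR_1 G(\bar v),G(0))$ already in the $\CR_2(F\circ G)$ block, and a dozen more from $\CR_3(F\circ G)$. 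Correspondingly, your proposed bijection of blocks is wrong on both sides: $\Delta_1 F(\Delta_2 G(0,v;x);G(x))$ contains a $\CR_2 F$ summand $D_1^1\CR_2 F(\Delta_2 G(0,v;x),G(x))$, so it is not produced by the $\CR_1 F\circ\CR_2 G$ block alone; and $\Delta_2 F(0,\Delta_1 G(v;x);G(x))$ contains $\CR_3 F$ and (after splitting $G(x)\cong G(0)\oplus\CR_1 G(x)$) $\CR_4 F$ summands, so it is not produced by the $\CR_2 F\circ(\CR_1 G,\CR_1 G)$ block alone.

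What actually happens is that the higher cross-effect terms on the left are matched, one by one, with terms on the right via the reduced and unreduced $D_1$ chain rules applied \emph{inside} each cross-effect slot; the $G(0)$ entries that appear on the left arise on the right only after decomposing $G(x)\cong G(0)\oplus\CR_1 G(x)$ and re-expanding $\CR_k F$ accordingly. There is no mechanism that kills the $\CR_{\ge 3}F$ contributions wholesale; they have to be tracked and paired. Your organizing principle is fine as a wrapper, but the core step needs the full term-by-term bookkeeping that the paper carries out in Sections~\ref{LHS}--\ref{proof}.
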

		\addtocounter{thm}{-1}
		
		
		The directional derivatives are defined in Section \ref{DirectionalSection}. The left and right sides of this equivalence are written in terms of the smallest component parts of the functors in Sections 3 and 4, respectively. These smallest components are the cross effects of $F$ and $G$;
		cross effects of functors are defined in Section \ref{CrSection}.    
		The proof of Theorem \ref{DirectionalChainRule} is concluded in Section \ref{proof} by matching terms (\ref{A1}) through (\ref{A31}) from Section \ref{LHS} with homotopy equivalent terms (\ref{B1}) through (\ref{B20}) from Section \ref{RHS}.  
		
	}
	\subsection{Acknowledgments}
	
	The authors would like to thank the Banff International Research Station (host of the second Women in Topology workshop) and the Pacific Institute for Mathematical Sciences for providing us with the opportunity to collaborate along with the other authors of \cite{BJORT}. We would also like to express our gratitude to Kristine Bauer, Brenda Johnson, and Emily Riehl for their guidance and support throughout this project.
	
	
	
	\section{Cross effects, linearization, and directional derivatives}
	
	In this section we provide the foundational tools and motivation for the main result, which is the second higher order directional derivative chain rule (Theorem \ref{DirectionalChainRule}). The construction of higher directional derivatives is possible once cross effects and linearizations of functors and some of their key properties are obtained. Thus, we begin by defining the cross effects and linearizations of functors.
	
	\subsection{Cross effects and linearization}\label{CrSection}

	The following definition appears as \cite[Definition 2.1]{BJORT}.
	
	\begin{defn}[\cite{EM}]\label{def:CR}  The \emph{$n$th cross effect} of a functor $F \colon \cA \to \cB$ between two abelian categories, where the zero object of $\cA$ is denoted by $0$, is the functor \[\CR_nF\colon \cA^{n} \to \cB\] defined recursively by 
		\[ F(x)\cong F(0)\oplus \CR_1F(x)\]
		\[ \CR_1F(x_1\oplus x_2) \cong \CR_1F(x_1)\oplus \CR_1 F(x_2) \oplus \CR_2 F(x_1, x_2)\]
		and in general,
		\begin{align*} \CR_{n-1}F(x_1\oplus x_2, x_3,  \ldots , x_n)\cong & \CR_{n-1}F(x_1, x_3, \ldots , x_n) \oplus \CR_{n-1}F(x_2, x_3, \ldots , x_n) \\ &\oplus \CR_nF(x_1, x_2, \ldots, x_n),\end{align*}
		where $\oplus$ denotes the biproduct in both categories $\cA$ and $\cB$ (a common abuse of notation).
	\end{defn}
	
	In this paper, we will call any functor between abelian categories an \emph{abelian functor}.

	The cross effect functor is \emph{multi-reduced} in the following sense.
	
	\begin{prop}\cite[Proposition 1.2]{JM:Deriving}\label{multi-reduced}
		For an abelian functor $F:\cA\to\cB$ and objects $x_1,\ldots,x_n$ in $\cA$, if any $x_i=0$, then 
		\[\CR_n F(x_1,\ldots, x_n)\cong 0.\]
	\end{prop}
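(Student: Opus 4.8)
The plan is to argue by induction on $n$, exploiting the recursive definition of the cross effects in Definition~\ref{def:CR} together with the observation that the direct-sum decompositions appearing there are not merely abstract isomorphisms: each summand inclusion is induced by the biproduct's structure maps (the inclusions $x_i \to x_1 \oplus x_2$ and the corresponding projections). Keeping track of these structure maps is what makes the argument work. For the base case $n = 1$, the first decomposition reads $F(0) \cong F(0) \oplus \CR_1 F(0)$, where the inclusion of the left summand is induced by $\id_0 \colon 0 \to 0$ and is therefore $\id_{F(0)}$. Since this summand inclusion is already an isomorphism onto the whole object, the complementary summand $\CR_1 F(0)$ must be zero.

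For the inductive step I would assume the statement for $\CR_{n-1}$ and suppose $x_i = 0$ for some index $i$, then read off the defining recursion
\[\CR_{n-1}F(x_1 \oplus x_2, x_3, \ldots, x_n) \cong \CR_{n-1}F(x_1, x_3, \ldots, x_n) \oplus \CR_{n-1}F(x_2, x_3, \ldots, x_n) \oplus \CR_n F(x_1, \ldots, x_n),\]
splitting into two cases according to the position of the vanishing argument. If $i \ge 3$, then each of the three $\CR_{n-1}F$ terms has a zero among its arguments, so all three vanish by the inductive hypothesis; the recursion then immediately forces $\CR_n F(x_1, \ldots, x_n) \cong 0$. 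If instead $i \in \{1, 2\}$, say $i = 2$, then $x_1 \oplus x_2 = x_1 \oplus 0$, and the canonical isomorphism $x_1 \cong x_1 \oplus 0$ induces an isomorphism of the left-hand side with the \emph{first} summand $\CR_{n-1}F(x_1, x_3, \ldots, x_n)$; meanwhile the middle summand $\CR_{n-1}F(0, x_3, \ldots, x_n)$ is zero by the inductive hypothesis, leaving $\CR_n F(x_1, 0, x_3, \ldots, x_n)$ as the only other possibly-nonzero summand. The case $i = 1$ is entirely analogous with the roles of the first two summands interchanged.

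The main obstacle is the cancellation carried out in the second case: passing from a decomposition $T \cong A \oplus C$ plus an isomorphism $T \cong A$ to the conclusion $C \cong 0$ is \emph{not} valid for arbitrary isomorphisms in a general abelian category. The resolution, and the reason I insisted above on tracking the structure maps, is that the isomorphism $T \cong A$ in play is not arbitrary: it is exactly the canonical summand inclusion $A \to T$ induced by the biproduct map $x_1 \to x_1 \oplus 0$, which is how that inclusion is built in the recursion. A summand inclusion $A \to A \oplus C$ that is simultaneously an isomorphism has cokernel $C$, so $C \cong 0$. Thus the crux of the write-up is to verify that the biproduct-induced map coincides with the summand inclusion of the recursion — a matter of unwinding the construction rather than a genuine computation — after which the cancellation is forced.
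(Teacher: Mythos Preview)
The paper does not actually supply a proof of this proposition: it is stated with a citation to \cite[Proposition~1.2]{JM:Deriving} and used without further justification. So there is nothing to compare your argument against within the paper itself.

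That said, your inductive argument is correct and is essentially the standard one. You have also correctly identified and resolved the only genuine subtlety: the cancellation step in the case $i\in\{1,2\}$ requires knowing that the isomorphism $\CR_{n-1}F(x_1\oplus 0,x_3,\ldots,x_n)\cong \CR_{n-1}F(x_1,x_3,\ldots,x_n)$ is precisely the first summand inclusion of the recursive splitting, not some arbitrary isomorphism. Your handling of this --- insisting from the outset that the splittings in Definition~\ref{def:CR} are induced by the biproduct structure maps, so that naturality in the first variable forces the compatibility --- is exactly what is needed. The one thing to be aware of is that Definition~\ref{def:CR} as written in this paper records only the abstract isomorphisms, so strictly speaking your ``unwinding the construction'' step appeals to the fuller construction in \cite{EM} or \cite{JM:Deriving}, where $\CR_nF$ is defined as a specific retract and the naturality is manifest; you should make that appeal explicit in a final write-up.
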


	Let $C_nF$ denote the $n$th cross effect of $F$, $\CR_n$, composed with the diagonal functor.  That is,  $C_nF(x) \defeq \CR_nF(x, \ldots , x)$.  Corollary 2.7 of \cite{BJORT} shows that $\CR_n$ and the diagonal functor are an adjoint pair of functors, so that $C_n$ is a comonad.  Let $\epsilon$ denote the counit of this comonad.  
	
	Functor calculus studies approximations of functors that behave like degree $n$ polynomials, up to chain homotopy equivalence.  Let $\simeq$ denote chain homotopy equivalence. The following definition makes the idea of polynomial degree $n$ functors precise.
	
	\begin{defn}
		An abelian functor $F:\cA\rightarrow\cB$ is \emph{degree n} if
		\[\CR_{n+1} F\simeq 0.\]
		In particular, $F$ is degree 1 if $\CR_2 F\simeq 0$.  If $F$ is also reduced, meaning that $F(0)\simeq 0$, then we say that $F$ is \emph{linear}. We call $F$ \emph{strictly reduced} if $F(0)\simeq 0$ is a chain homotopy equivalence. 
	\end{defn}
	
	\begin{defn} \label{def:D1} \cite[Definition 5.1]{BJORT} The \emph{ linearization} of an abelian functor $F\colon \cA \to \cB$ is the (abelian) functor $D_1F\colon \cA\to { Ch} \cB$ given as the explicit chain complex $(D_1F_*, \partial_*)$ where:
		\[ (D_1F)_k \coloneqq \begin{cases} 
		C_2^{\times k}F& k\geq 1\\
		\CR_1F & k=0 \\
		0 & \text{otherwise}\end{cases}\]
		and $C_2^{\times k}$ is the functor $C_2= \CR_2\circ diag$ composed with itself $k$ times.

		The chain differential $\partial_1\colon (D_1F)_1 \to (D_1F)_0$ is given by the map $\epsilon$ of \cite[Remark 2.8]{BJORT}, and the chain differential $\partial_k \colon (D_1F)_k \to (D_1F)_{k-1}$ is given by $\sum_{i=1}^k (-1)^i C_2^{\times i}\epsilon$ when $k\geq 1$, where $\epsilon$ is the counit of the adjunction of \cite[Corollary 2.7]{BJORT}.
		
	\end{defn}
	Note that, as just defined, $D_1F$  is a functor whose codomain is the category of {\it chain complexes} in $\cB$.  This causes a potential problem when composing functors. Intuitively, we would expect a relationship between $D_1(F\circ G)$ and $D_1F\circ D_1G$ (this will be made explicit in Lemmas \ref{reducedChainRule} and \ref{D1ChainRule}).  However, a priori, if $G:\cA\to \cB$ and $F:\cB\to \cC$ then $D_1F:\cB\to Ch\cC$ and $D_1G:\cA\to Ch \cB$ are not composable.  To remedy this, we define this composite using the Dold-Kan correspondence (see, e.g. \cite[8.4]{Weibel}).  Define $D_1F\circ D_1G$ by 
	
	\[
	\begin{tikzcd} 
	\cA \ar[r, "D_1G"] & Ch\cB \ar[r, "K", "\simeq"'] & \cB^{\DDelta^{op}} \ar[r, "D_1F^{\DDelta^{op}}"] & Ch \cB^{\DDelta^{op}} \ar[r, "N", "\simeq"'] & Ch Ch \cB \ar[r, "Tot"] & Ch\cB
	\end{tikzcd}
	\]
	where $K$ and $N$ denote the inverse functors of the Dold-Kan equivalence, and $Tot$ is the totalization functor.  When we compose functors such as these, we implicitly mean that we use this procedure to compose.  This process is made effortless by using the structure of a Kleisli category, which is explained in detail in Section 3 of \cite{BJORT}.  For the purposes of this paper, we will suppress the $Ch$ in the codomain of functors, and write $D_1G:\cA \kto  \cB$ instead of $D_1G:\cA \to Ch\cB$ to indicate that we are working in this Kleisli category and to avoid cumbersome $Ch$'s.

	The linearization functor satisfies the following properties:
	
	\begin{lem} \label{lem:D1-linearity} \cite[Lemma 5.6]{BJORT}  $\quad$
		\begin{enumerate}
			\item For any abelian functor $F \colon \cA \to \cB$, the functor $D_1F \colon \cA \kto { \cB}$ is strictly reduced,  and for any $x, y \in \cA$, the natural map
			\[ D_1F(x) \oplus D_1F(y) \to D_1F(x \oplus y)\] is a chain homotopy equivalence.  In particular, $D_1F$ is linear.
			\item  
			In the category of chain complexes of abelian categories, pointwise chain homotopy equivalence classes in $Fun(\cA,\cB)$ are denoted by $[\cA,\cB]$. The functor $D_1 \colon [\cA,\cB] \to [\cA,\cB]$ is linear in the sense that $D_10 \cong 0$ and for any pair of functors $F, G \in [\cA,\cB]$, \[D_1F \oplus D_1G \cong D_1(F \oplus G).\]
		\end{enumerate}
	\end{lem}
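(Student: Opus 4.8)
The plan is to handle the two parts separately, and within part (1) to dispatch reducedness quickly before attacking the additivity claim, which is the substance of the lemma. For strict reducedness I would evaluate the complex $D_1F$ at the zero object. By Definition \ref{def:D1} we have $(D_1F)_0(0) = \CR_1F(0)$ and $(D_1F)_k(0) = C_2^{\times k}F(0)$ for $k\geq 1$. Both $\CR_1F(0)$ and $C_2F(0) = \CR_2F(0,0)$ vanish by multi-reducedness (Proposition \ref{multi-reduced}), and since $\CR_2$ is multi-reduced this forces $C_2^{\times k}F(0)\cong 0$ for every $k\geq 1$. Hence $D_1F(0)$ is isomorphic to the zero complex, which is a fortiori a chain homotopy equivalence, so $D_1F$ is strictly reduced.

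For the natural map $\iota\colon D_1F(x)\oplus D_1F(y)\to D_1F(x\oplus y)$, the plan is first to reduce the claim to a statement about second cross effects. Since every term of $D_1F$ is reduced (as just shown) and cross effects are computed from biproducts together with their canonical splittings, in each degree the map $\iota$ is a split monomorphism whose cokernel is $\CR_2\big((D_1F)_k\big)(x,y)$. These cokernels assemble into the complex $\CR_2(D_1F)(x,y)$, so $\iota$ is a chain homotopy equivalence precisely when $\CR_2(D_1F)(x,y)\simeq 0$; combined with reducedness this is exactly the assertion that $D_1F$ is linear, and the ``in particular'' clause follows for free. It then remains to contract $\CR_2(D_1F)$. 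To do this I would use that, in positive degrees, $D_1F$ is the comonad resolution $C_2^{\times\bullet}\CR_1F$ of $\CR_1F$ for the comonad $C_2=\CR_2\circ\mathrm{diag}$ (noting $C_2F = C_2\CR_1F$ because $\CR_2$ is multi-reduced), with differential $\sum_i(-1)^iC_2^{\times i}\epsilon$ assembled from the counit $\epsilon$. The goal would be to build a contracting homotopy $s$ on $\CR_2(D_1F)(x,y)$ from the comonad structure maps, an extra degeneracy supplied by $\epsilon$, so that $\partial s+s\partial=\mathrm{id}$ in each degree.

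I expect the construction and verification of this contracting homotopy to be the main obstacle. The difficulty is that $\CR_2$ and $C_2$ do not commute strictly: expanding $C_2F(x\oplus y)=\CR_2F(x\oplus y,x\oplus y)$ through the decomposition in Definition \ref{def:CR} yields, beyond $C_2F(x)\oplus C_2F(y)$, a collection of genuinely mixed terms (one $\CR_2F(x,y)$ summand, several $\CR_3F$ summands, and a $\CR_4F$ summand), so the individual terms of $D_1F$ are \emph{not} additive. The homotopy must organize exactly these mixed terms across all degrees and cancel them against the comonad differential, and keeping track of the signs in $\sum_i(-1)^iC_2^{\times i}\epsilon$ together with the naturality of $\epsilon$ under the cross-effect splittings is where the real bookkeeping lies.

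Part (2) I would argue directly and degreewise. Because cross effects are built from biproducts and retractions, operations that are additive in the functor argument, there is a natural isomorphism $\CR_n(F\oplus G)\cong \CR_nF\oplus\CR_nG$, hence $C_2^{\times k}(F\oplus G)\cong C_2^{\times k}F\oplus C_2^{\times k}G$ for all $k$, compatibly with $\epsilon$ and therefore with the differentials of Definition \ref{def:D1}. This gives an isomorphism of chain complexes $D_1(F\oplus G)\cong D_1F\oplus D_1G$, and $\CR_n0\cong 0$ gives $D_10\cong 0$, establishing the linearity of $D_1$ in the functor variable.
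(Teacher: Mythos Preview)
The paper does not supply its own proof of this lemma; it is quoted verbatim from \cite[Lemma~5.6]{BJORT} and used as a black box. So there is no in-paper argument to compare against, and your proposal should be read as an independent attempt at the BJORT proof.

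Your outline is the standard one and is correct in its broad strokes: strict reducedness follows immediately from Proposition~\ref{multi-reduced}; additivity reduces to showing $\CR_2(D_1F)\simeq 0$; and part~(2) is exactly the observation that cross effects, being built from biproduct retractions, are additive in the functor variable. The one substantive slip is in your description of the contracting homotopy. You say the extra degeneracy is ``supplied by $\epsilon$,'' but $\epsilon\colon C_2\to\id$ is the counit and \emph{lowers} degree---it is already what the differential is made of. The extra degeneracy that contracts $\CR_2(D_1F)$ comes instead from the \emph{unit} of the adjunction between $\CR_2$ and the diagonal (cf.\ \cite[Corollary~2.7]{BJORT}): writing $T=\CR_2\circ\mathrm{diag}^*$ for the induced monad on multivariable functors, one has $\CR_2(C_2^k F)\cong T^k(\CR_2F)$, and the unit $\eta\colon\id\to T$ furnishes the degree-raising map $s_k\colon T^k\CR_2F\to T^{k+1}\CR_2F$ needed for $\partial s+s\partial=\id$. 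You correctly anticipate that the bookkeeping here is the real work, and you are honest that you have not carried it out; with the $\epsilon\mapsto\eta$ correction, the plan would go through.
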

	
	We will follow Convention 5.11 in \cite{BJORT}. In particular, given $F:\cA^n\rightarrow\cB$, consider $F_i:\cA\rightarrow\cB$ defined by 
	\[F_i(y):=F(x_1,\ldots,x_{i-1},y,x_{i+1},\ldots, x_n),\]
	where $x_1,\ldots,x_{i-1},x_{i+1},\ldots,x_n$ are fixed objects of $\cA$. We will write $D_1^iF(x_1,\ldots,x_n)$ for $D_1 F_i(x_i)$. 
	In cases where a single variable $x_i$ occurs in multiple inputs of a multi-variable functor $F$, and we wish to indicate simultaneous multi-linearization of all occurrences of $x_i$, we will use the notation $D_1^{x_i}F$. 
	Let us look at a specific example to see how $D_1^{x_i}F$ works.
	\begin{ex}
		Let $F:\cA^4 \to \cB$ and consider $D_1^xF(x,y,x,z)$.  Define $G:\cA\to\cB$ as 
		\begin{align*}
		G(x)&\defeq F(x,y,x,z)\\
		&= F(-,y,-,z)\circ diag(x)
		\end{align*}
		where $diag$ is the diagonal functor. Then $D_1^xF(x,y,x,z)\defeq D_1G(x)$.
	\end{ex}
	
	We will frequently use these two notations when dealing with cross effects. 
	For example, in the proof of Lemma \ref{nabla2} we consider the sequential linearizations $D_1^1 D_1^2 \CR_2 F(a,b)$ of the two-variable functor $\CR_2 F$.  In this case, we linearize this functor in each variable separately. On the other hand, in Lemma \ref{BJORTCor5.13}, we consider functors such as $D_1^x \CR_2 F(x, x)$, which is the linearization of the functor $\CR_2F$ in its two variables simultaneously. These two linearization processes produce quite different results.

	\subsection{Higher order directional derivatives}	\label{DirectionalSection} \label{Section:FunctorDirectionalDerivatives}
	
	

	Recall that the directional derivative of a differentiable function $f \colon \RR^n \to \RR^m$ at the point $x\in \RR^n$ in the direction $v\in \RR^n$ measures how the value of $f$ at $x$ changes while translating along the infinitesimal vector from $x$ in the direction $v$. One way to make this idea precise is to define $\nabla f(v;x)$ to be the derivative of the composite function, substituting the affine linear function $t \mapsto x + tv$ into the argument of $f$, evaluated at $t=0$:
	\[\left.\Delta_1f(v;x)\defeq\nabla f(v;x) = \frac{\partial}{\partial t}f \left(x+tv\right)\right|_{t=0}.\]
	In \cite{HMY}, it was shown that the first directional derivative has a chain rule:
	\[\Delta_1 (f\circ g)(v;x)=\Delta_1f(\Delta_1g(v;x);g(x)).\]
	Using the first directional derivative, we can define the second directional derivative:
	\[ \left.\Delta_2f(w,v;x)\defeq \frac{\partial}{\partial t}\Delta_1f\left(v+tw;x+tv\right)\right|_{t=0},\]
	which also has a chain rule \cite{HMY}:
	\[\Delta_2(f\circ g)(w,v;x)=\Delta_2f(\Delta_2g(w,v;x),\Delta_1g(v;x);g(x)).\]
	Generally speaking, for any $n$ there is a higher order directional derivative along with a corresponding chain rule (see \cite[Theorem 3]{HMY}).	
	
	When Johnson and McCarthy established abelian functor calculus, they constructed an analog to the first directional derivative along with a chain rule \cite[Proposition 5.6]{JM:Deriving}. The formula for this chain rule from \cite{JM:Deriving} mirrors the case when $n=1$ in the directional derivative chain rule for functions found in \cite{HMY}.  These similarities provide the motivation to pursue higher order directional derivatives of abelian functors in the hopes of acquiring an analogous higher order chain rule. 
	
	
	
	An equivalent notion of the directional derivative of a functor was developed in \cite{BJORT} and will be used here.

	\begin{defn}\label{defn:dir-derivative} \cite[Lemma 6.3]{BJORT}
		For an abelian functor $F \colon \cA\to\cB$ with objects $x$ and $v$ in $\cA$, define
		\[ \nabla F(v;x) \defeq D_1 F(x \oplus -)(v).\]
	\end{defn}

	When defining higher order directional derivatives for functors in \cite{BJORT}, the goal was to imitate the iterative process used to define the higher directional derivatives of functions in \cite{HMY}.
	
	\begin{defn} \label{def:delta_2} \cite[Definition 7.3]{BJORT}  Consider an abelian functor $F:\cA\rightarrow \cB$ and objects $x$, $v$, and $w$ in $\cA$. The   \emph{higher order directional derivatives} of $F$ are defined recursively by 
		\begin{align*} \Delta_0F(x) &\defeq F(x),\\ 
		\Delta_1F(v; x) &\defeq \nabla F(v; x),\\
		\Delta_2 F(w,v; x) &\defeq \nabla(\Delta_1 F)\left( (w; v); (v; x) \right).\end{align*}
		We say that $\Delta_1F(v;x)$ is the \emph{first directional derivative} of $F$ at $x$ in the direction $v$. Similarly, we say that $\Delta_2F(w,v;x)$ is the \emph{second higher order directional derivative} of $F$ at $x$ in the directions $v$ and $w$.
	\end{defn}
	
	\begin{rmk}
		As in \cite{HMY}, we can continue and define the $n$th directional derivative, but in this paper we will stop at $n=2$. The full definition can be found in \cite[Definition 7.3]{BJORT}.
	\end{rmk}

	It was shown in \cite[Proposition 5.6]{JM:Deriving} that the  first directional derivative has a chain rule up to quasi-isomorphism.  The chain rule is now strengthened to a chain homotopy equivalence.
	
	\begin{thm}\cite[Theorem 6.5(v)]{BJORT}
		Given two composable abelian functors $G:\cA \rightarrow \cB$ and $F:\cB \rightarrow \cC$,  there is a chain homotopy equivalence
		\[\Delta_1(F\circ G)(v;x)\simeq \Delta_1F(\Delta_1G(v;x);G(x)).\]
	\end{thm}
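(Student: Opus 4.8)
The plan is to unwind both sides of the asserted equivalence using the identity $\Delta_1 H(v;x) = D_1 H(x\oplus -)(v)$ from Definitions \ref{defn:dir-derivative} and \ref{def:delta_2}, and then to reduce the claim to the chain rule for the linearization of \emph{reduced} functors, as established in \cite{BJORT}. The guiding idea is that $D_1$ only sees the reduced (linear) part of a functor, so after translating $G$ near $x$ and $F$ near $G(x)$ the composite collapses to a composite of reduced functors, to which the linearization chain rule applies directly.

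First I would record the two translated functors $\hat G \defeq G(x\oplus -)\colon \cA\to\cB$ and $\check F \defeq F(G(x)\oplus -)\colon \cB\to\cC$. Splitting off the value at $0$ via the first cross effect gives $\hat G \cong G(x)\oplus \CR_1\hat G$ and $\check F\cong F(G(x))\oplus \CR_1\check F$, with $\CR_1\hat G$ and $\CR_1\check F$ reduced. The central computation is the factorization
\[ (F\circ G)(x\oplus -) \;=\; \check F\circ \CR_1\hat G \;\cong\; F(G(x))\oplus\bigl(\CR_1\check F\circ \CR_1\hat G\bigr), \]
valid because $\CR_1\hat G$ is reduced, so inserting it into the splitting of $\check F$ separates the constant summand $F(G(x))$ from the reduced composite $\CR_1\check F\circ\CR_1\hat G$.

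Next I would apply $D_1$. Since $D_1$ annihilates constant functors (its building blocks $\CR_1$ and $C_2$ vanish on constants, by Definition \ref{def:D1}) and is additive (Lemma \ref{lem:D1-linearity}), the constant summand drops out, leaving $D_1\bigl((F\circ G)(x\oplus -)\bigr)\cong D_1\bigl(\CR_1\check F\circ \CR_1\hat G\bigr)$. As both factors are now reduced, the chain rule for linearization of reduced functors from \cite{BJORT} supplies a chain homotopy equivalence $D_1(\CR_1\check F\circ\CR_1\hat G)\simeq D_1\CR_1\check F\circ D_1\CR_1\hat G$; and since $D_1$ ignores the constant summands, $D_1\CR_1\check F\simeq D_1\check F$ and $D_1\CR_1\hat G\simeq D_1\hat G$. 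Evaluating at $v$ and unwinding the abbreviations $D_1\hat G(v)=\nabla G(v;x)=\Delta_1 G(v;x)$ and $D_1\check F(-)=\nabla F(-;G(x))=\Delta_1 F(-;G(x))$ then produces exactly $\Delta_1 F(\Delta_1 G(v;x);G(x))$, as desired.

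The main obstacle is not the bookkeeping above but the reduced linearization chain rule itself together with the meaning of the outer composite $D_1\check F\circ D_1\hat G$: because $D_1\hat G$ lands in chain complexes, the application of $D_1\check F$ must be interpreted through the Dold--Kan/Kleisli composition described after Definition \ref{def:D1}, and one must verify that the chain homotopy equivalence---rather than the mere quasi-isomorphism of \cite[Proposition 5.6]{JM:Deriving}---is preserved by totalization, and that the two sides evaluate the chain complex $\Delta_1 G(v;x)$ under the same conventions. Checking this compatibility with totalization is the genuinely technical point, and it is precisely the strengthening that \cite{BJORT} provides over \cite{JM:Deriving}.
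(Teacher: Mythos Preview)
The paper does not prove this theorem; it is stated with a citation to \cite[Theorem 6.5(v)]{BJORT} and then used as a black box en route to the main result, Theorem \ref{DirectionalChainRule}. So there is no proof in the present paper against which to compare your attempt.

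That said, your argument is correct and is essentially the natural route to the result. The key factorization $(F\circ G)(x\oplus -)=\check F\circ\CR_1\hat G\cong F(G(x))\oplus(\CR_1\check F\circ\CR_1\hat G)$ is valid, and once you are in the reduced setting the chain rule for $D_1$ (Lemma \ref{reducedChainRule} here, \cite[Proposition 5.7]{BJORT}) applies directly; combining with $D_1\CR_1\simeq D_1$ (Observation \ref{D1CR1Trick}) finishes the identification. Your closing remark about the Kleisli composition and the need for a chain homotopy equivalence rather than the quasi-isomorphism of \cite[Proposition 5.6]{JM:Deriving} pinpoints exactly where the work lies, and that upgrade is precisely what \cite{BJORT} provides.
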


	This brings us to the formulation of our main theorem.
	
	\begin{thm}\label{DirectionalChainRule} 
		Given two composable abelian functors $G:\cA \rightarrow \cB$ and $F:\cB \rightarrow \cC$ with object $x$, $v$, and $w$ in $\cA$,  there is a chain homotopy equivalence  
		\begin{equation*}\Delta_2(F\circ G)(w,v;x)\simeq \Delta_2F(\Delta_2G(w,v;x),\Delta_1G(v;x);G(x)).\end{equation*}
	\end{thm}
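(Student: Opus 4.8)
The plan is to reduce both sides of the claimed equivalence to a common normal form---a finite direct sum of iterated linearizations of cross effects of $F$ and of $G$---and then to match the resulting summands. The only structural tools needed are the linearity of $D_1$ (Lemma \ref{lem:D1-linearity}), the multi-reducedness of cross effects (Proposition \ref{multi-reduced}), and the first order chain rule \cite[Theorem 6.5(v)]{BJORT} recorded above. All $\simeq$-computations are carried out in the Kleisli category, so that composites such as $D_1F\circ D_1G$ are meaningful.

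First I would unwind $\Delta_2$ for a single functor $H$. By Definition \ref{def:delta_2}, $\Delta_2 H(w,v;x)=\nabla(\Delta_1 H)\bigl((w;v);(v;x)\bigr)=D_1\bigl[\Delta_1 H((v;x)\oplus(-;-))\bigr](w;v)$, where $\oplus$ is taken coordinatewise in $\cA\times\cA$. Applying Lemma \ref{lem:D1-linearity}(1) to the product category splits this into the summand linear in the first coordinate $w$ and the summand linear in the second coordinate $v$. The first summand collapses, using that $\Delta_1 H=D_1H(x\oplus-)$ is already linear in its argument, to $\Delta_1 H(w;x)$; the second is a linearization of the base-point derivative in direction $v$ and therefore brings in $\CR_2 H$ evaluated along $(v,v)$. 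Packaging this computation is exactly the role of the auxiliary formula for $\Delta_2$ (Lemma \ref{nabla2}): it exhibits $\Delta_2 H$ as a direct sum of a first-derivative-in-$w$ part and a second-cross-effect-in-$v$ part, in direct analogy with the classical identity $\Delta_2 f(w,v;x)=Df(x)(w)+D^2f(x)(v,v)$.

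For the left-hand side I would set $H=F\circ G$, rewrite $\Delta_1(F\circ G)\simeq\Delta_1 F(\Delta_1 G(v;x);G(x))$ by the first order chain rule, and then apply $\nabla$ once more. Carrying the outer $\nabla$ through this composite requires the chain rule for $D_1$ of a composite (Lemmas \ref{reducedChainRule} and \ref{D1ChainRule}); after that, iterating the cross-effect relations of Definition \ref{def:CR}, splitting every resulting biproduct by Lemma \ref{lem:D1-linearity}, and deleting the summands with a zero entry via Proposition \ref{multi-reduced}, writes $\Delta_2(F\circ G)(w,v;x)$ as the explicit list of terms (\ref{A1})--(\ref{A31}). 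For the right-hand side I would instead substitute the normal forms of $\Delta_2 G(w,v;x)$ and $\Delta_1 G(v;x)$ into the formula for $\Delta_2 F$ from the previous paragraph, expand the outer $\Delta_2 F$ into its cross effects, and simplify in the same way, obtaining the shorter list (\ref{B1})--(\ref{B20}).

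The proof then concludes by matching, and this bookkeeping is where I expect the main obstacle to lie: the two lists have different lengths, so many of the A-terms must first be shown either to vanish or to be redundant before a correspondence with the B-terms can emerge. The delicate point is tracking which copy of which variable sits in which cross-effect slot, and in particular distinguishing a \emph{sequential} linearization such as $D_1^1 D_1^2\CR_2 F$ from a \emph{simultaneous} one such as $D_1^x\CR_2 F(x,x)$; these are genuinely inequivalent (Lemma \ref{BJORTCor5.13}), and conflating them is the easiest way to miscount. Once each surviving term from (\ref{A1})--(\ref{A31}) is paired with a chain homotopy equivalent term from (\ref{B1})--(\ref{B20}), the individual equivalences assemble into a single equivalence because $D_1$ and the cross effects preserve finite direct sums up to $\simeq$, which gives Theorem \ref{DirectionalChainRule}.
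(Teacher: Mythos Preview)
Your overall plan---expand each side into iterated linearizations of cross effects and match---is exactly the paper's strategy, and your identification of Lemma~\ref{lem:D1-linearity}, Proposition~\ref{multi-reduced}, and Lemma~\ref{BJORTCor5.13} as the relevant simplification tools is correct. However, two points are off.

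First, your route to the left-hand list is not the paper's. You propose rewriting $\Delta_1(F\circ G)$ via the first-order chain rule and then applying $\nabla$ a second time. The paper does not do this; it applies Lemma~\ref{nabla2} directly to $F\circ G$, obtaining four summands of the form $D_1^{\cdots}\CR_k(F\circ G)$, and then expands each $\CR_k(F\circ G)$ using the Johnson--McCarthy formula for cross effects of a composite (Lemma~\ref{JMProp1.6}). It is Lemma~\ref{JMProp1.6}, not the first-order chain rule, that produces the list (\ref{A1})--(\ref{A31}); Lemma~\ref{BJORTCor5.13} then kills the terms in which the linearization variable appears in more than one inner cross-effect slot. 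Your proposed route (chain rule inside, then $\nabla$ outside) is closer in spirit to the inductive argument of \cite[Theorem 8.1]{BJORT}, and it is not at all clear that it lands on the same list (\ref{A1})--(\ref{A31}).

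Second, and more seriously, you have the bookkeeping backwards. The right-hand list is \emph{not} shorter: the labels in Section~\ref{RHS} are non-sequential, and there are in fact thirty-two summands on that side (through (\ref{B31}) together with (\ref{C1})), one \emph{more} than on the left. No term among (\ref{A1})--(\ref{A31}) vanishes or becomes redundant at the matching stage. The correspondence is essentially one-to-one, the single exception being $(\ref{A1})=D_1(F\circ G)(w)$, which splits as $(\ref{B1})\oplus(\ref{C1})$ via the \emph{unreduced} $D_1$ chain rule (Lemma~\ref{D1ChainRule}); this is precisely where the correction term $D_1^1\CR_2 F(D_1G(w),G(0))$ enters. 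Every other pair $(\ref{A2})$--$(\ref{A31})\leftrightarrow(\ref{B2})$--$(\ref{B31})$ is matched by a short computation of one of a handful of types, each amounting to one or two applications of the reduced chain rule (Lemma~\ref{reducedChainRule}) together with the idempotency observations $D_1\CR_1 F\simeq D_1 F$ and $D_1 D_1 F\simeq D_1 F$. So the ``main obstacle'' you anticipate---killing excess $A$-terms---does not arise; the only subtlety is recognizing that (\ref{A1}) contributes two pieces, and that the remaining matches are routine once the inner functors are seen to be reduced.
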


	We prove Theorem \ref{DirectionalChainRule} by expanding both sides and showing they are equivalent. The idea of the proof is to break each side down into direct sums of the smallest component parts.  These smallest component parts are linearizations  of compositions of cross effects for $F$ and $G$. The left-hand side is expanded in Section \ref{LHS}, the right-hand side is expanded in Section \ref{RHS}, and the terms from both sides are aligned to finish the proof in Section \ref{proof}.
	
	Before moving on to the expansions of each side, equivalent formulations of the first and second directional derivatives are stated. 
	
	We will use the following chain homotopy equivalent formulation of $\Delta_1$.
	
	\begin{lem}\label{nabla1}\cite[Lemma 6.3]{BJORT} For a functor $F:\cA\rightarrow \cB$ between abelian categories and any pair of objects $x,v\in \cA$, there is a chain homotopy equivalence
		$$\Delta_1F(w;x)\simeq D_1F(w)\oplus D_1^1\CR_2F(w,x).$$
	\end{lem}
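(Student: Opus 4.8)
The plan is to unwind the definition $\Delta_1F(w;x)=D_1\bigl(F(x\oplus-)\bigr)(w)$ from Definition~\ref{defn:dir-derivative} and to split the functor $y\mapsto F(x\oplus y)$ into a direct sum of pieces whose linearizations can be read off directly. Applying the cross effect recursion of Definition~\ref{def:CR} twice (once to expose $F(0)$ and once to expand $\CR_1F(x\oplus y)$) yields a natural isomorphism
\[
F(x\oplus y)\;\cong\;\bigl(F(0)\oplus\CR_1F(x)\bigr)\;\oplus\;\CR_1F(y)\;\oplus\;\CR_2F(x,y),
\]
and the crucial point is that this isomorphism is natural in the free variable $y$, with $x$ held fixed. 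Viewed as functors of $y$, the first summand is constant, the second is $\CR_1F$, and the third is $\CR_2F(x,-)$.

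First I would pass to linearizations using the functor-level additivity of $D_1$ (Lemma~\ref{lem:D1-linearity}(2)), applied to this three-fold sum. The linearization of the constant summand vanishes, since every term of $D_1$ in Definition~\ref{def:D1} is assembled from $\CR_1$ and $\CR_2=\CR_2\circ diag$, and all cross effects of a constant functor are zero. For the middle summand I would use the splitting $F\cong F(0)\oplus\CR_1F$ together with additivity once more to conclude $D_1F\cong D_1\CR_1F$, so that $\CR_1F$ contributes exactly $D_1F$. This gives
\[
D_1\bigl(F(x\oplus-)\bigr)\;\cong\;D_1F\;\oplus\;D_1\bigl(\CR_2F(x,-)\bigr),
\]
and evaluating at $w$ produces $D_1F(w)\oplus D_1\bigl(\CR_2F(x,-)\bigr)(w)$.

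It remains to match the second term with the advertised $D_1^1\CR_2F(w,x)$. By the convention for the notation $D_1^i$, the quantity $D_1\bigl(\CR_2F(x,-)\bigr)(w)$ is precisely $D_1^2\CR_2F(x,w)$, the linearization of $\CR_2F$ in its second slot evaluated at $w$. Invoking the symmetry of the second cross effect, $\CR_2F(x,z)\cong\CR_2F(z,x)$ naturally in $z$, I would rewrite this as the linearization in the first slot, $D_1^1\CR_2F(w,x)$, which finishes the identification and hence the lemma.

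I expect the main technical care to lie in verifying that the cross effect decomposition is genuinely natural in $y$, as this is exactly what licenses the functor-level additivity of $D_1$ rather than a mere pointwise splitting; the remaining bookkeeping — discarding the constant summand, replacing $D_1\CR_1F$ by $D_1F$, and using the symmetry of $\CR_2$ to turn $D_1^2$ into $D_1^1$ — is routine but should be recorded explicitly.
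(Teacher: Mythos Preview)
The paper does not supply its own proof of this lemma; it is simply quoted from \cite[Lemma 6.3]{BJORT}. Your argument is correct and is exactly the expected one: decompose $F(x\oplus y)$ via the cross-effect recursion into a constant piece, $\CR_1F(y)$, and $\CR_2F(x,y)$, all naturally in $y$; apply the functor-level additivity of $D_1$ from Lemma~\ref{lem:D1-linearity}(ii); discard the constant summand; replace $D_1\CR_1F$ by $D_1F$ (this is Observation~\ref{D1CR1Trick} in the paper); and finally swap $D_1^2\CR_2F(x,w)$ for $D_1^1\CR_2F(w,x)$ using the symmetry of the second cross effect. Each step is justified, and your caveat about needing naturality in $y$ for the additivity step is the right one to flag.
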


	
	
	Similarly, there is an equivalent formulation for $\Delta_2$.
	
	\begin{lem}\label{nabla2}
		For a functor $F: \cA\rightarrow \cB$ between abelian categories, there is a chain homotopy equivalence, 
		\[\Delta_2F(w,v;x)\simeq D_1F(w)\oplus D_1^1\CR_2F(w,x)\oplus D_1^1D_1^2\CR_2F(v,v)\oplus D_1^1D_1^2\CR_3F(v,v,x).\]
	\end{lem}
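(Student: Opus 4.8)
The plan is to unwind the recursive definition
$\Delta_2F(w,v;x)=\nabla(\Delta_1F)\big((w;v);(v;x)\big)$ by regarding $\Delta_1F$ as a single functor on the product abelian category $\cA\times\cA$ (input = direction, basepoint), so that the outer $\nabla$ is an ordinary directional derivative, and then applying Lemma \ref{nabla1} with $F\rightsquigarrow\Delta_1F$, direction $W=(w;v)$, and basepoint $X=(v;x)$:
\[\Delta_2F(w,v;x)\simeq D_1(\Delta_1F)(w;v)\ \oplus\ D_1^1\CR_2(\Delta_1F)\big((w;v),(v;x)\big).\]
Into both summands I would substitute the formula $\Delta_1F(-;-)\simeq D_1F(-)\oplus D_1^1\CR_2F(-,-)$ of Lemma \ref{nabla1}. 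Since $D_1$ and $\CR_2$ preserve chain homotopy equivalence and are additive in the functor variable (Lemma \ref{lem:D1-linearity} together with additivity of cross effects), this replaces $\Delta_1F$ by the sum of the two bifunctors $A(p;q):=D_1F(p)$ and $B(p;q):=D_1^1\CR_2F(p,q)$ and lets me distribute $D_1$ and $\CR_2$ over the sum.

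The technical engine I would set up first is a splitting/vanishing principle for linearizing over the product. Because $D_1$ of any functor is linear (Lemma \ref{lem:D1-linearity}(1)) and $(w;v)\cong(w;0)\oplus(0;v)$ in $\cA\times\cA$, for any $K\colon\cA\times\cA\to\cB$ one has $D_1K(w;v)\simeq D_1K(w;0)\oplus D_1K(0;v)$. The inclusions $p\mapsto(p;0)$ and $q\mapsto(0;q)$ preserve biproducts and the zero object, hence commute with the recursive cross-effect construction, so $D_1K(w;0)\cong D_1\big(K(-,0)\big)(w)$ and $D_1K(0;v)\cong D_1\big(K(0,-)\big)(v)$. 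In particular, \emph{if $K$ is reduced in each variable separately}, i.e. $K(p,0)\simeq0\simeq K(0,q)$, then $D_1K\simeq0$. This is the functor-calculus analogue of the fact that a product of two reduced quantities has no linear part, and it is the step I expect to be the crux of the argument.

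With this principle the first summand is immediate. Since $A$ is reduced only in its second variable, the splitting gives $D_1A(w;v)\simeq D_1\big(D_1F\big)(w)\simeq D_1F(w)$ (using that $D_1F$ is strictly reduced and linear), while $B$ is reduced in each variable, so $D_1B(w;v)\simeq0$; thus $D_1(\Delta_1F)(w;v)\simeq D_1F(w)$, the first term of the claim. For the second summand, linearity of $A$ forces $\CR_2A\simeq0$, so $\CR_2(\Delta_1F)\simeq\CR_2B$, which I would compute from the definition using that $B$ is linear in its first variable:
\[\CR_2B\big((p_1;q_1),(p_2;q_2)\big)\simeq B(p_1,q_2)\oplus B(p_2,q_1)\oplus\CR_2^{(2)}B(p_1;q_1,q_2)\oplus\CR_2^{(2)}B(p_2;q_1,q_2),\]
where $\CR_2^{(2)}$ denotes the second cross effect taken in the $q$-slot. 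Iterating cross effects and commuting $D_1^1$ past the $q$-variable identifies $\CR_2^{(2)}B(p;q_1,q_2)\cong D_1^1\CR_3F(p,q_1,q_2)$, via $\CR_2^{(2)}\CR_2F\cong\CR_3F$.

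To finish, I would set the second pair-variable equal to $X=(v;x)$ (so $p_2=v$, $q_2=x$), apply $D_1^1$ (linearize the first pair-variable at direction $W=(w;v)$), and run each of the four summands through the splitting principle. The two axis-supported pieces survive and the two bireduced pieces vanish: $B(p_1,x)$ is linear in $p_1$ and yields $D_1^1\CR_2F(w,x)$; $B(v,q_1)$ depends only on $q_1$ and yields $D_1^1D_1^2\CR_2F(v,v)$; $\CR_2^{(2)}B(p_1;q_1,x)$ is reduced in both $p_1$ and $q_1$, hence contributes $0$; and $\CR_2^{(2)}B(v;q_1,x)\cong D_1^1\CR_3F(v,q_1,x)$ depends only on $q_1$ and yields $D_1^1D_1^2\CR_3F(v,v,x)$. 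Collecting these with the first summand gives exactly the claimed four-term equivalence. The genuine obstacles are establishing the splitting/vanishing principle (the reduced-in-each-variable case), and correctly bookkeeping the cross-effect expansion of $\CR_2B$ together with the identity $\CR_2^{(2)}\CR_2F\cong\CR_3F$ and the commutation of $D_1^1$ with cross effects taken in a different variable; everything else is the additivity and homotopy-invariance of $D_1$ and $\CR_\bullet$ already recorded above.
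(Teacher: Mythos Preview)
Your approach is sound but takes a genuinely different route from the paper. The paper's proof is a one-line substitution: it invokes \cite[Lemma~6.9]{BJORT}, which already records the closed formula
\[
\nabla(\nabla F)\big((d;c);(b;a)\big)\simeq D_1F(d)\oplus D_1^1\CR_2F(d,a)\oplus D_1^1D_1^2\CR_2F(b,c)\oplus D_1^1D_1^2\CR_3F(b,c,a),
\]
and then sets $a=x$, $b=c=v$, $d=w$. You are instead reconstructing that formula from scratch: you apply Lemma~\ref{nabla1} to the functor $\Delta_1F$ on $\cA\times\cA$, establish a splitting principle $D_1K(w;v)\simeq D_1\big(K(-,0)\big)(w)\oplus D_1\big(K(0,-)\big)(v)$ together with the attendant vanishing when $K$ is reduced in each slot, and then expand $\CR_2B$ for $B(p;q)=D_1^1\CR_2F(p,q)$ using linearity of $B$ in $p$ and the identity $\CR_2^{(2)}\CR_2F\cong\CR_3F$. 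This is exactly the computation that the cited BJORT lemma packages, so what you buy is self-containment at the cost of length; the paper buys brevity by outsourcing.

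One bookkeeping point you should make explicit: in your terms (2) and (4) the order of linearizations you actually produce is $D_1^2D_1^1$ (the inner $D_1^1$ is already built into $B$, and your outer linearization is in $q_1$), whereas the statement reads $D_1^1D_1^2$. You need either the commutation of sequential linearizations in distinct variables up to chain homotopy equivalence, or the symmetry of $\CR_2F$, to match the two. You correctly flag the other delicate steps (the splitting/vanishing principle on the product and the commutation of $D_1^1$ with $\CR_2^{(2)}$); with those in hand your argument goes through.
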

	
	\begin{proof}
		By \cite[Lemma 6.9]{BJORT}, for objects $a$, $b$, $c$, and $d$ in $\cA$, we have that
		\[\nabla(\nabla F)((d;c);(b;a))\simeq D_1F(d)\oplus D_1^1\CR_2 F(d,a)\oplus D_1^1 D_1^2\CR_2 F(b,c)\oplus D_1^1 D_1^2\CR_3 F(b,c,a).\]
		Letting $a\defeq x$, $b\defeq v$, $c\defeq v$, and $d 
		\defeq w$, we get our desired result:
		\begin{align*}
		\Delta_2(F\circ G)(w,v;x)
		&=\nabla(\Delta_1F)((w,v);(v;x))\\
		&\simeq D_1F(w)\oplus D_1^1\CR_2F(w,x)\oplus D_1^1D_1^2\CR_2F(v,v)\oplus D_1^1D_1^2\CR_3F(v,v,x),
		\end{align*}
		where, for example, $D_1^1\CR_2F(w,x)\defeq D_1(\CR_2F(-,x))(w)$.
	\end{proof}

	%
	%
	%

	\section{The second order directional derivative of a composition}\label{LHS}

	In this section, we expand the left-hand side of Theorem \ref{DirectionalChainRule}. We begin by applying Lemma \ref{nabla2}.  We then use Lemma \ref{JMProp1.6} to rewrite the cross effects of the composition of functors in terms that are more manageable.  Finally, we use Lemma \ref{BJORTCor5.13}, which shows that a majority of the terms are contractible.
	
	Before explicitly computing $\Delta_2(F\circ G)$ in terms of cross effects, it will be useful to understand how we can rewrite terms such as $D_1^{x_2}\CR_2(F\circ G)(x_1,x_2)$. 
	Recall that \cite{JM:Deriving} provides a formula for the $p$th cross effects of a composition of functors.

	\begin{lem}\label{JMProp1.6} \cite[Proof of Proposition 1.6]{JM:Deriving}
		Let $G:\cA\rightarrow\cB$ and $F:\cB\rightarrow\cC$ be functors between abelian categories. Let $x_1,\ldots,x_p$ be objects in $\cA$ and $\langle p\rangle=\{1,2,\ldots,p\}$. For $U=\{s_1,\ldots,s_t\}\subseteq \langle p\rangle$, let $\CR_UG$ denote  $\CR_tG(x_{s_1},\ldots,x_{s_t})$ and for $U=\emptyset$, let $\CR_UG=G(0)$. Then
		$$\CR_p(F\circ G)(x_1,\ldots,x_p)\cong \bigoplus_{\{U_1,\ldots,U_k \vert  U_i\neq U_j\}\subseteq \mathcal{P}(\langle p \rangle),\cup_{i=1}^kU_i=\langle p \rangle} \CR_kF(\CR_{U_1}G,\ldots,\CR_{U_k}G).$$
	\end{lem}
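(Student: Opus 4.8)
The plan is to derive the formula from a single \emph{complete splitting} identity together with an inclusion--exclusion (M\"obius) computation. The first step is to record, for an arbitrary abelian functor $H\colon\cA\to\cB$ and objects $y_1,\dots,y_m$, the natural isomorphism
\[
H(y_1\oplus\cdots\oplus y_m)\cong\bigoplus_{S\subseteq\langle m\rangle}\CR_{|S|}H\bigl((y_s)_{s\in S}\bigr),
\]
with the convention $\CR_0H\defeq H(0)$. This is proved by induction on $m$ straight from Definition \ref{def:CR}: the cases $m\le 2$ are the first two defining relations, and the inductive step splits off one variable at a time using the general recursion for $\CR_{n-1}H(y_1\oplus y_2,\dots)$. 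Since each step is an honest biproduct decomposition, the isomorphism is natural and its summands are genuine direct summands; this is what will let me avoid working only up to virtual equivalence later.

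I would then apply this identity twice to $F\circ G$ evaluated at $x_1\oplus\cdots\oplus x_p$. Splitting $G$ first gives $G\bigl(\bigoplus_{i\in\langle p\rangle}x_i\bigr)\cong\bigoplus_{U\subseteq\langle p\rangle}\CR_UG$, so the inner variables fed to $F$ are indexed by the subsets $U\in\mathcal P(\langle p\rangle)$ (including $U=\emptyset$, which contributes the constant object $\CR_\emptyset G=G(0)$). Splitting $F$ in these variables then yields
\[
(F\circ G)\Bigl(\bigoplus_{i\in\langle p\rangle}x_i\Bigr)\cong\bigoplus_{\mathcal W\subseteq\mathcal P(\langle p\rangle)}\CR_{|\mathcal W|}F\bigl((\CR_UG)_{U\in\mathcal W}\bigr),
\]
where each $\mathcal W=\{U_1,\dots,U_k\}$ is a collection of \emph{distinct} subsets of $\langle p\rangle$; distinctness is automatic, since $\mathcal W$ is a subset of $\mathcal P(\langle p\rangle)$.

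To isolate $\CR_p(F\circ G)$, I would invert the complete splitting identity. M\"obius inversion on the Boolean lattice gives
\[
\CR_p(F\circ G)(x_1,\dots,x_p)=\sum_{T\subseteq\langle p\rangle}(-1)^{p-|T|}\,(F\circ G)\Bigl(\bigoplus_{i\in T}x_i\Bigr),
\]
and substituting the double decomposition computes the multiplicity of a fixed term $\CR_kF(\CR_{U_1}G,\dots,\CR_{U_k}G)$ as $\sum_{T\supseteq R}(-1)^{p-|T|}$, where $R=U_1\cup\cdots\cup U_k$. This sum equals $(-1)^{p-|R|}\sum_{S\subseteq\langle p\rangle\setminus R}(-1)^{|S|}$, which is $1$ when $R=\langle p\rangle$ and $0$ otherwise. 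Thus precisely the collections with $\bigcup_j U_j=\langle p\rangle$ survive, each with multiplicity one, which is exactly the index set in the claimed formula.

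The step I expect to be the real obstacle is upgrading this signed (Grothendieck-group) cancellation to an honest natural isomorphism of functors, since the minus signs in M\"obius inversion encode cancellation of matched summands that must be realized on the nose. The cleanest way to secure this is to avoid the signed sum and match summands directly: I would group the terms of the double decomposition by $R=\bigcup_jU_j$ and identify the block $R=\langle p\rangle$ with $\CR_p(F\circ G)$, using Proposition \ref{multi-reduced} to see that a term with $\bigcup_jU_j=R$ vanishes whenever some $x_i=0$ with $i\in R$ (setting $x_i=0$ kills the factor $\CR_{U_j}G$ for any $U_j\ni i$, hence the whole $\CR_{|\mathcal W|}F$ term by multi-reducedness of $F$), so that the grouping is forced by the uniqueness of the complete splitting applied to $F\circ G$. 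The signed computation above then serves as the combinatorial verification that only the $R=\langle p\rangle$ block remains.
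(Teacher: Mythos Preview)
The paper does not give its own proof of this lemma; it is quoted from \cite[Proof of Proposition 1.6]{JM:Deriving} and then applied (as in Example~\ref{CompositionEx}) without further argument, so there is nothing in the paper to compare your approach against.

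Your argument is correct, and your final paragraph correctly isolates and handles the only real subtlety. The double application of the complete splitting identity and the M\"obius count are standard and fine. For the upgrade from a Grothendieck-group identity to an honest natural isomorphism, your proposed fix works and can be made precise as follows: for each $T\subseteq\langle p\rangle$ the endomorphism $e_T$ of $(F\circ G)\bigl(\bigoplus_i x_i\bigr)$ given by ``set $x_i=0$ for $i\notin T$ and include back'' is a natural idempotent, and by your multi-reducedness observation its image is simultaneously $\bigoplus_{S\subseteq T}\CR_S(F\circ G)$ and $\bigoplus_{R\subseteq T}(\text{$R$-block of the double decomposition})$. The idempotent projecting onto the top piece is the alternating combination $\sum_{T\subseteq\langle p\rangle}(-1)^{p-|T|}e_T$, which is intrinsic to $F\circ G$ and hence cuts out the same direct summand in both decompositions; this yields $\CR_p(F\circ G)\cong(\langle p\rangle\text{-block})$ as an actual natural isomorphism, not merely a virtual one. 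Your M\"obius computation is then exactly the bookkeeping that names the surviving summands.
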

	
	Let us see explicitly what this formula gives in a simple case.
	
	\begin{ex} \label{CompositionEx}
		Consider the case when $p=2$. Note that we can cover the set $\langle p \rangle$ with up to four distinct subsets since the cardinality of $\mathcal{P}(\langle p \rangle)$ is $4$.  To cover the set $\langle p \rangle$ with one set, it must be itself. To cover $\langle p \rangle$ with two subsets, there are four possibilities: $\{\{1,2\}\,\emptyset\}$, $\{\{1\},\{2\}\}$, $\{\{1\},\{1,2\}\}$, and $\{\{2\},\{1,2\}\}$. Similarly, there are four different ways to cover $\langle p \rangle$ with three subsets, and one way to cover $\langle p \rangle$ with four subsets. Applying the formula from Lemma \ref{JMProp1.6} gives 
		\begin{align*}
		\CR_2&(F\circ G)(x_1,x_2) \\
		&\cong \CR_1F(\CR_2G(x_1,x_2)) \oplus \CR_2F(\CR_2G(x_1,x_2),G(0)) \oplus \CR_2F(\CR_1G(x_1),\CR_1G(x_2)) \\
		&\quad \oplus \CR_2F(\CR_1G(x_1),\CR_2G(x_1,x_2))
		\oplus \CR_2F(\CR_1G(x_2),\CR_2G(x_1,x_2))  \\
		& \quad \oplus  \CR_3F(\CR_1G(x_1),\CR_1G(x_2),G(0)) \oplus \CR_3F(\CR_1G(x_1),\CR_2G(x_1,x_2),G(0)) \\
		& \quad \oplus \CR_3F(\CR_1G(x_2),\CR_2G(x_1,x_2),G(0)) \oplus \CR_3F(\CR_1G(x_1),\CR_1G(x_2),\CR_2G(x_1,x_2))\\
		& \quad \oplus \CR_4F(\CR_1G(x_1),\CR_1G(x_2),\CR_2G(x_1,x_2),G(0)).
		\end{align*}
	\end{ex}
	
	
	In order to simplify the expansion, we will use Lemma \ref{BJORTCor5.13} to conclude that some of the summands are in fact contractible.

	\begin{lem}\label{BJORTCor5.13} \cite[Corollary 5.13]{BJORT} Suppose the abelian functor $F:\cA\rightarrow \cB$ factors as 
		\[
		\begin{tikzcd} 
		\cA \arrow[rr, "F"]  \arrow[dr, "diag"'] & & \cB \\
		& \cA^n \arrow[ru, "H"'] &
		\end{tikzcd}
		\]
		where $diag:\cA\rightarrow \cA^n$ is the diagonal functor and $H$ is strictly multi-reduced.  Then $D_1F$ is contractible.
	\end{lem}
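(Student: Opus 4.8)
The plan is to move the diagonal and the coordinate inclusions across the linearization functor, reducing the whole complex to a direct sum of linearizations of the zero functor. Write $\bar H \colon \cA^n \to \cB$ for $H$ regarded as a functor of the single variable $y \in \cA^n$, so that $F = \bar H \circ diag$ with $diag \colon \cA \to \cA^n$ the ($n$-fold) diagonal $x \mapsto (x,\dots,x)$; I assume $n \ge 2$, as the statement is meaningless for $n=1$. The key observation is that in the biproduct category $\cA^n$ one has $diag(x) \cong \bigoplus_{i=1}^n \iota_i(x)$, where $\iota_i \colon \cA \to \cA^n$ sends $x$ to the tuple with $x$ in the $i$th slot and $0$ elsewhere.

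First I would record the elementary fact that $D_1$ commutes with precomposition by additive functors. If $L \colon \cC \to \cD$ preserves biproducts and the zero object, then the recursive isomorphisms of Definition \ref{def:CR} give $\CR_m(P \circ L) \cong (\CR_m P)\circ L^{\times m}$ for every functor $P \colon \cD \to \cB$ and every $m$. Since precomposition by $L$ also commutes with the diagonal $\cD \to \cD^2$ defining $C_2$, it follows that $C_2(P \circ L) \cong (C_2 P) \circ L$, hence $C_2^{\times k}(P \circ L) \cong (C_2^{\times k} P) \circ L$; the chain differentials agree because the counit $\epsilon$ is natural. Thus $D_1(P \circ L) \cong (D_1 P) \circ L$ as chain complexes of functors. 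Applying this with $\cC = \cA$, $\cD = \cA^n$, $P = \bar H$, and $L = diag$ gives $D_1 F \cong (D_1 \bar H) \circ diag$.

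Next I would invoke the linearity of $D_1 \bar H$. By Lemma \ref{lem:D1-linearity}(1) the functor $D_1 \bar H \colon \cA^n \kto \cB$ is linear, so it carries the biproduct $diag(x) \cong \bigoplus_i \iota_i(x)$ to a direct sum:
\[ D_1 F(x) \cong D_1\bar H\bigl(\textstyle\bigoplus_{i=1}^n \iota_i(x)\bigr) \simeq \bigoplus_{i=1}^n D_1 \bar H\bigl(\iota_i(x)\bigr). \]
Each $\iota_i$ is additive, so the commuting property again gives $D_1\bar H \circ \iota_i \cong D_1(\bar H \circ \iota_i)$. Because $n \ge 2$, the tuple $\iota_i(x)$ has a zero entry, so strict multi-reducedness of $H$ forces $\bar H \circ \iota_i$ to be (strictly equivalent to) the zero functor, whence $D_1(\bar H \circ \iota_i) \simeq D_1(0) = 0$. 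Summing over $i$ yields $D_1 F \simeq 0$, as desired.

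The step I expect to demand the most care is the first technical input — that cross effects, and hence $D_1$, genuinely commute with precomposition along additive functors — since this is precisely what licenses pulling both $diag$ and the $\iota_i$ through $D_1$; once it is in hand, everything else is formal. A secondary point to watch is that one uses \emph{strict} (not merely up-to-homotopy) multi-reducedness to identify $\bar H \circ \iota_i$ with the zero functor, although a homotopy-reduced hypothesis would still suffice to make each summand contractible.
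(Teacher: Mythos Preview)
The paper does not prove this lemma; it is simply quoted from \cite[Corollary~5.13]{BJORT} without argument, so there is no in-paper proof to compare against. That said, your argument is correct. The two ingredients you isolate --- that $D_1$ commutes with precomposition by any functor preserving biproducts and the zero object, and that in $\cA^n$ one has $diag(x)\cong\bigoplus_{i=1}^n \iota_i(x)$ --- are both valid, and together they reduce $D_1 F(x)$ to a finite direct sum of linearizations of functors that are (strictly) zero. Your remark that the statement requires $n\ge 2$ is also apt: for $n=1$ the hypothesis collapses to ``$H$ is strictly reduced'' and the conclusion fails (take $H=\id_\cA$).

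The one step that would want an extra sentence in a formal write-up is the compatibility of the chain differentials under the levelwise isomorphism $C_2^{\times k}(P\circ L)\cong (C_2^{\times k}P)\circ L$: one must check that the direct-sum splittings defining the cross effects are natural, so that the induced isomorphism intertwines $\epsilon_{P\circ L}$ with $\epsilon_P \ast L$. This is routine, and you already flag it as the point needing care.
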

	
	Recall that cross effects are strictly multi-reduced functors. Note that the composition of strictly multi-reduced functors is still strictly multi-reduced.

	\begin{ex}\label{diag0ex}
		Using the expansion from Example \ref{CompositionEx}, let us compute $D_1^{x_2}\CR_2(F\circ G)(x_1,x_2)$. As noted previously, we can distribute $D_1^{x_2}$ to each summand. Since we are linearizing with respect to $x_2$ and since the functor $\CR_2F(\CR_1G(-),\CR_2G(x_1,-))$ is strictly multi-reduced, Lemma \ref{BJORTCor5.13} tells us that, for example,
		\begin{align*}
		D_1^{x_2}\CR_2F(\CR_1G(x_2),\CR_2G(x_1,x_2))
		&=D_1^{x_2}\CR_2F(\CR_1G(-),\CR_2G(x_1,-)) \circ diag(x_2) \\
		&\simeq 0,
		\end{align*}
		where $diag:\mathcal{A}\rightarrow \mathcal {A} \times \mathcal {A}$ is the diagonal functor $x_2\mapsto(x_2,x_2)$. Hence
		
		\begin{align*}
		D_1^{x_2}\CR_2&(F\circ G)(x_1,x_2) \\
		&\simeq D_1^{x_2} \CR_1F(\CR_2G(x_1,x_2))
		\oplus D_1^{x_2}\CR_2F(\CR_2G(x_1,x_2),G(0))\\
		&\quad \oplus D_1^{x_2}\CR_2F(\CR_1G(x_1),\CR_1G(x_2)) 
		\oplus D_1^{x_2}\CR_2F(\CR_1G(x_1),\CR_2G(x_1,x_2)) \\
		& \quad \oplus  D_1^{x_2}\CR_3F(\CR_1G(x_1),\CR_1G(x_2),G(0)) 
		\oplus D_1^{x_2}\CR_3F(\CR_1G(x_1),\CR_2G(x_1,x_2),G(0)).
		\end{align*}
	\end{ex}
	
	We proceed with the expansion of the left-hand side of \ref{DirectionalChainRule}. To start, applying Lemma \ref{nabla2} gives 
	\begin{align}
	\label{LHSFourSummands}\Delta_2(F\circ G)(w,v;x)\simeq & D_1(F\circ G)(w)\oplus D_1^2\CR_2(F\circ G)(x,w) \\
	&\nonumber \oplus D_1^1D_1^2\CR_2(F\circ G)(v,\bar{v})\oplus D_1^2D_1^3\CR_3(F\circ G)(x,v,\bar{v}),
	\end{align}
	where the new variable $\bar{v}:=v$ is introduced to better illustrate the computations. This expression provides the foundation for expanding the left-hand side. 
	We expand the second, third, and fourth terms of (\ref{LHSFourSummands}) using the methods illustrated in Examples \ref{CompositionEx} and \ref{diag0ex}. Specifically, Lemma \ref{JMProp1.6} is used to rewrite the cross effect, then Lemma \ref{lem:D1-linearity} is used to distribute the linerization functor(s) to each summand, and finally Lemma \ref{BJORTCor5.13} is applied to find the terms that are contractible.  The first term of (\ref{LHSFourSummands}) will be addressed later in Section \ref{proof}.
	
	
	\newpage
	For the second summand of (\ref{LHSFourSummands}):
	\begin{align*}
	D_1^2\CR_2&(F\circ G)(w,x)\\
	= & D_1^w\CR_2(F\circ G)(w,x)\\
	\simeq & D_1^{w}\CR_1F(\CR_2G(w,x))
	\oplus D_1^{w}\CR_2F(\CR_2G(w,x),G(0))\\ 
	&\quad \oplus D_1^{w}\CR_2F(\CR_1G(w),\CR_1G(x)) 
	\oplus D_1^{w}\CR_2F(\CR_2G(w,x),\CR_1G(x))\\
	&\quad \oplus  D_1^{w}\CR_3F(\CR_1G(w),\CR_1G(x),G(0))
	\oplus D_1^{w}\CR_3F(\CR_2G(w,x),\CR_1G(x),G(0)).
	\end{align*}
	
	For the third summand of (\ref{LHSFourSummands}):
	\begin{align*}
	D_1^1D_1^2\CR_2&(F\circ G)(v,\bar{v})\\
	=&D_1^vD_1^{\bar{v}}\CR_2(F\circ G)(v,\bar{v}) \\
	\simeq &D_1^vD_1^{\bar{v}}\CR_1F(\CR_2G(v,\bar{v}))
	\oplus D_1^vD_1^{\bar{v}}\CR_2F(\CR_2G(v,\bar{v}),G(0)) \\
	&\quad \oplus D_1^vD_1^{\bar{v}}\CR_2F(\CR_1G(v),\CR_1G(\bar{v}))
	\oplus D_1^vD_1^{\bar{v}}\CR_3F(\CR_1G(v),\CR_1G(\bar{v}),G(0)).
	\end{align*}
	
	For the fourth summand of (\ref{LHSFourSummands}):
	\begin{align*}
	D_1^2D_1^3&\CR_3(F\circ G)(v,\bar{v},x)\\
	=&D_1^vD_1^{\bar{v}}\CR_3(F\circ G)(v,\bar{v},x)\\
	\simeq & D_1^vD_1^{\bar{v}} \CR_1F(\CR_3G(v,\bar{v},x))\\
	&\oplus D_1^vD_1^{\bar{v}}\CR_2F(\CR_3G(v,\bar{v},x),G(0))
	\oplus D_1^vD_1^{\bar{v}}\CR_2F(\CR_2G(v,\bar{v}),\CR_1G(x)) \\
	&  \oplus  D_1^vD_1^{\bar{v}}\CR_2F(\CR_3G(v,\bar{v},x),\CR_1G(x))  
	\oplus D_1^vD_1^{\bar{v}}\CR_2F(\CR_1G(v),\CR_2G(\bar{v},x))\\
	& \oplus D_1^vD_1^{\bar{v}}\CR_2F(\CR_1G(\bar{v}),\CR_2G(v,x))
	\oplus D_1^vD_1^{\bar{v}}\CR_2F(\CR_2G(v,x),\CR_2G(\bar{v},x)) \\
	& \oplus D_1^vD_1^{\bar{v}}\CR_3F(\CR_2G(v,\bar{v}),\CR_1G(x),G(0)) 
	\oplus D_1^vD_1^{\bar{v}}\CR_3F(\CR_3G(v,\bar{v},x),\CR_1G(x),G(0)) \\
	& \oplus D_1^vD_1^{\bar{v}}\CR_3F(\CR_1G(v),\CR_2G(\bar{v},x),G(0))
	\oplus D_1^vD_1^{\bar{v}}\CR_3F(\CR_1G(\bar{v}),\CR_2G(v,x), G(0))\\
	&\oplus D_1^vD_1^{\bar{v}}\CR_3F(\CR_2G(v,x),\CR_2G(\bar{v},x),G(0))
	\oplus D_1^vD_1^{\bar{v}}\CR_3F(\CR_1G(v),\CR_1G(\bar{v}),\CR_1G(x))\\
	&\oplus D_1^vD_1^{\bar{v}}\CR_3F(\CR_1G(v),\CR_2G(\bar{v},x),\CR_1G(x))
	\oplus D_1^vD_1^{\bar{v}}\CR_3F(\CR_1G(\bar{v}),\CR_2G(v,x),\CR_1G(x))\\
	&\oplus D_1^vD_1^{\bar{v}}\CR_3F(\CR_2G(v,x),\CR_2G(\bar{v},x),\CR_1G(x))\\
	&\oplus D_1^vD_1^{\bar{v}}\CR_4F(\CR_1G(v),\CR_1G(\bar{v}),\CR_1G(x),G(0)) \\
	&\oplus D_1^vD_1^{\bar{v}}\CR_4F(\CR_1G(v),\CR_2G(\bar{v},x),\CR_1G(x),G(0)) \\
	& \oplus D_1^vD_1^{\bar{v}}\CR_4F(\CR_1G(\bar{v}),\CR_2G(v,x),\CR_1G(x),G(0)) \\
	& \oplus D_1^vD_1^{\bar{v}}\CR_4F(\CR_2G(v,x),\CR_2G(\bar{v},x),\CR_1G(x),G(0)).
	\end{align*}
	
	
	All together, the expansion of the left-hand side of Theorem \ref{DirectionalChainRule} is 
	\begin{align}
	&\nonumber \Delta_2(F\circ G)(w,v;x)\simeq\\
	&\label{A1} D_1(F\circ G)(w)\\ 
	&\label{A2} \oplus  D_1^{w}\CR_1F(\CR_2G(w,x))\\
	&\label{A3} \oplus D_1^{w}\CR_2F(\CR_2G(w,x),G(0))\\ 
	&\label{A4} \oplus D_1^{w}\CR_2F(\CR_1G(w),\CR_1G(x)) \\
	&\label{A5} \oplus D_1^{w}\CR_2F(\CR_2G(w,x),\CR_1G(x))\\
	&\oplus\label{A6}  D_1^{w}\CR_3F(\CR_1G(w),\CR_1G(x),G(0))\\
	& \oplus\label{A7} D_1^{w}\CR_3F(\CR_2G(w,x),\CR_1G(x),G(0))\\
	& \label{A8}\oplus D_1^vD_1^{\bar{v}}\CR_1F(\CR_2G(v,\bar{v}))\\ 
	& \label{A9} \oplus D_1^vD_1^{\bar{v}}\CR_2F(\CR_2G(v,\bar{v}),G(0)) \\
	& \label{A10} \oplus D_1^vD_1^{\bar{v}}\CR_2F(\CR_1G(v),\CR_1G(\bar{v}))\\
	&\label{A11} \oplus D_1^vD_1^{\bar{v}}\CR_3F(\CR_1G(v),\CR_1G(\bar{v}),G(0))\\
	&\label{A12} \oplus D_1^vD_1^{\bar{v}} \CR_1F(\CR_3G(v,\bar{v},x))\\
	&\label{A13}\oplus D_1^vD_1^{\bar{v}}\CR_2F(\CR_3G(v,\bar{v},x),G(0))\\
	&\label{A14}\oplus D_1^vD_1^{\bar{v}}\CR_2F(\CR_2G(v,\bar{v}),\CR_1G(x)) \\
	&\label{A15}  \oplus  D_1^vD_1^{\bar{v}}\CR_2F(\CR_3G(v,\bar{v},x),\CR_1G(x))\\
	&\label{A16} \oplus D_1^vD_1^{\bar{v}}\CR_2F(\CR_1G(v),\CR_2G(\bar{v},x))\\
	&\label{A17} \oplus D_1^vD_1^{\bar{v}}\CR_2F(\CR_1G(\bar{v}),\CR_2G(v,x))\\
	&\label{A18} \oplus D_1^vD_1^{\bar{v}}\CR_2F(\CR_2G(v,x),\CR_2G(\bar{v},x)) \\
	&\label{A19}\oplus D_1^vD_1^{\bar{v}}\CR_3F(\CR_2G(v,\bar{v}),\CR_1G(x),G(0))\\ 
	&\label{A20} \oplus D_1^vD_1^{\bar{v}}\CR_3F(\CR_3G(v,\bar{v},x),\CR_1G(x),G(0))\\
	&\label{A21} \oplus D_1^vD_1^{\bar{v}}\CR_3F(\CR_1G(v),\CR_2G(\bar{v},x),G(0))\\
	&\label{A22} \oplus D_1^vD_1^{\bar{v}}\CR_3F(\CR_1G(\bar{v}),\CR_2G(v,x), G(0))\\
	&\label{A23}\oplus D_1^vD_1^{\bar{v}}\CR_3F(\CR_2G(v,x),\CR_2G(\bar{v},x),G(0))\\
	&\label{A24} \oplus D_1^vD_1^{\bar{v}}\CR_3F(\CR_1G(v),\CR_1G(\bar{v}),\CR_1G(x))\\
	&\label{A25}\oplus D_1^vD_1^{\bar{v}}\CR_3F(\CR_1G(v),\CR_2G(\bar{v},x),\CR_1G(x))\\
	&\label{A26}\oplus D_1^vD_1^{\bar{v}}\CR_3F(\CR_1G(\bar{v}),\CR_2G(v,x),\CR_1G(x))\\
	&\label{A27}\oplus D_1^vD_1^{\bar{v}}\CR_3F(\CR_2G(v,x),\CR_2G(\bar{v},x),\CR_1G(x))\\
	&\label{A28}\oplus D_1^vD_1^{\bar{v}}\CR_4F(\CR_1G(v),\CR_1G(\bar{v}),\CR_1G(x),G(0)) \\
	&\label{A29}\oplus D_1^vD_1^{\bar{v}}\CR_4F(\CR_1G(v),\CR_2G(\bar{v},x),\CR_1G(x),G(0)) \\
	&\label{A30} \oplus D_1^vD_1^{\bar{v}}\CR_4F(\CR_1G(\bar{v}),\CR_2G(v,x),\CR_1G(x),G(0)) \\
	&\label{A31} \oplus D_1^vD_1^{\bar{v}}\CR_4F(\CR_2G(v,x),\CR_2G(\bar{v},x),\CR_1G(x),G(0)).
	\end{align}
	
	Notice that each term is labeled individually. In Section \ref{proof}, these terms will be aligned with terms (\ref{B1}) through (\ref{B20}), which come from the right-hand side of Theorem \ref{DirectionalChainRule}. We turn our attention to the expansion of the right-hand side in the next section.
	
	\section{A composition of directional derivatives}\label{RHS}
	
	The right-hand side of Theorem \ref{DirectionalChainRule}, which is a composition of directional derivatives, can also be expanded. Most of the results needed for this expansion were discussed previously in Section \ref{Section:FunctorDirectionalDerivatives}. Specifically, Lemma \ref{nabla1} and Lemma \ref{nabla2} reformulate the expression as a direct sum of linearizations of cross effects, rather than directional derivatives. In addition, Lemma \ref{unreducedG} is necessary to expand the right-hand side in to its smallest component parts in order to align the terms with the left-hand side expansion.
	
	The first step in expanding the right-hand side of Theorem \ref{DirectionalChainRule},
	\[\Delta_2F(\Delta_2G(w,v;x),\Delta_1G(v;x);G(x)),\]
	is to rewrite the second directional derivative of $F$,  $\Delta_2F$, in terms of linearizations of $F$ using Lemma \ref{nabla2}:
	\begin{align}
	\nonumber \Delta_2F&(\Delta_2G(w,v;x),\Delta_1G(v;x);G(x))\\
	&\label{RHSFourSummands}\simeq D_1^1 F(\Delta_2 G(w,v;x))\oplus D_1^1D_1^2\CR_2 F (\Delta_1 G(v;x),\Delta_1 G(v;x))\\
	&\nonumber \quad \oplus D_1^1D_1^2\CR_3 F(\Delta_1 G(v;x),\Delta_1 G(v;x), G(x))\oplus D_1^1\CR_2F(\Delta_2 G(w,v;x),G(x))
	\end{align}

	Notice that $G(x)$ appears as a variable in this expansion. But in the complete expansion of the left-hand side at the end of Section \ref{LHS}, the term $G(x)$ never appears as a variable of one of the cross effects of $F$. Instead it is observed that $\CR_1G(x)$ appears. In order to get a clear correspondence between the expansions of the two sides, a description of the relationship between the occurrence of $G(x)$ versus $\CR_1G(x)$ as a variable of $\CR_k F$ is required.

	\begin{lem}\label{unreducedG}
		Let $G:\cA \rightarrow \cB$ and $F:\cB \rightarrow \cC$ be two composable abelian functors. For $k>0$,
		\begin{align*}
		\CR_k F(x_1,\ldots,x_{k-1},G(x))\cong 
		& \CR_k F(x_1, \ldots, x_{k-1}, G(0))\oplus \CR_k F(x_1,\ldots, x_{k-1},\CR_1 G(x))\\
		&\oplus \CR_{k+1} F(x_1,\ldots,x_{k-1}, G(0),\CR_1 G(x))
		\end{align*}
	\end{lem}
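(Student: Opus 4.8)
The plan is to reduce the statement to the defining additivity of cross effects, applied to a single biproduct decomposition in $\cB$. First I would recall that the very first isomorphism in Definition \ref{def:CR}, applied to the functor $G$ in place of $F$, gives precisely
\[ G(x) \cong G(0) \oplus \CR_1 G(x) \]
as objects of $\cB$. The point is that the last input of $\CR_k F$ is the object $G(x)$, which therefore splits as a biproduct in $\cB$, and the whole lemma will follow by feeding this splitting into a single application of the cross-effect recursion.

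Next I would substitute this decomposition into the $k$th slot of $\CR_k F$ and apply the additivity property. The general recursion in Definition \ref{def:CR} describes the effect of splitting an input as a biproduct $a \oplus b$: one obtains the two \emph{diagonal} terms (the cross effect evaluated on $a$ and on $b$ respectively) together with a single \emph{cross} term $\CR_{k+1} F$ carrying one additional input, in which $a$ and $b$ occupy adjacent slots. Taking $a = G(0)$ and $b = \CR_1 G(x)$ then yields exactly the three summands
\[ \CR_k F(x_1,\ldots,x_{k-1},G(0)),\ \CR_k F(x_1,\ldots,x_{k-1},\CR_1 G(x)),\ \CR_{k+1} F(x_1,\ldots,x_{k-1},G(0),\CR_1 G(x)), \]
which is the claimed decomposition.

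The only genuine subtlety is that the recursion as stated in Definition \ref{def:CR} splits the \emph{first} argument, whereas here we are splitting the \emph{last}. I would handle this by invoking the symmetry of cross effects: $\CR_n F$ is a symmetric functor of its $n$ arguments (a standard property of cross effects, going back to \cite{EM}), so splitting the last input agrees, up to the natural reordering isomorphism, with splitting the first. I therefore expect the main obstacle to be purely bookkeeping---tracking the reordering isomorphisms and confirming that the extra input in the cross term lands in the stated position---rather than anything conceptual. As a sanity check, the case $k=1$ needs no symmetry at all: there the claim is just the second isomorphism of Definition \ref{def:CR}, namely $\CR_1 F(G(0)\oplus \CR_1 G(x)) \cong \CR_1 F(G(0)) \oplus \CR_1 F(\CR_1 G(x)) \oplus \CR_2 F(G(0),\CR_1 G(x))$, applied with $x_1 = G(0)$ and $x_2 = \CR_1 G(x)$.
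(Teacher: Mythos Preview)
Your proposal is correct and follows essentially the same approach as the paper: both proofs use the decomposition $G(x)\cong G(0)\oplus \CR_1 G(x)$ from Definition~\ref{def:CR} and then apply the cross-effect recursion once to that biproduct input. If anything, you are more careful than the paper, which simply writes the recursion in the last slot without commenting on the fact that Definition~\ref{def:CR} is stated for the first slot; your remark about symmetry of cross effects fills that small gap.
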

	\begin{proof}
		Using the the definition of the first cross effect, 
		\[G(x)\cong \CR_1G(x)\oplus G(0)\]
		and the definition of the $(k+1)$st cross effect,
		\begin{align*}
		\CR_k F(x_1,\ldots,x_{k-1}, x_k\oplus x_{k+1})\cong 
		& \CR_k F(x_1,\ldots,x_{k-1}, x_k)\oplus \CR_k F(x_1,\ldots,x_{k-1}, x_{k+1})\\
		&\oplus \CR_{k+1} F(x_1,\ldots,x_{k-1}, x_k, x_{k+1}),
		\end{align*}
		it follows that
		\begin{align*}
		\CR_k F(x_1,\ldots,x_{k-1},G(x))
		& \cong \CR_k F(x_1,\ldots,x_{k-1},\CR_1G(x)\oplus G(0))\\
		&\cong \CR_k F(x_1, \ldots, x_{k-1}, \CR_1G(x))\oplus \CR_k F(x_1,\ldots, x_{k-1},G(0))\\
		&\quad\oplus \CR_{k+1} F(x_1,\ldots,x_{k-1},\CR_1 G(x), G(0))
		\end{align*}
	\end{proof}
	
	Applying Lemma \ref{unreducedG} to the third and fourth summands of \eqref{RHSFourSummands}, we obtain
	\begin{align}
	\nonumber&\Delta_2F(\Delta_2G(w,v;x),\Delta_1G(v;x);G(x))\\ 
	&\label{RHS8Summands}\simeq D_1^1 F(\Delta_2 G(w,v;x))
	\oplus D_1^1D_1^2\CR_2 F (\Delta_1 G(v;x),\Delta_1 G(v;x))\\
	&\nonumber \quad \oplus D_1^1D_1^2\CR_3 F(\Delta_1 G(v;x),\Delta_1 G(v;x), G(0))\\
	&\nonumber \quad \oplus D_1^1D_1^2\CR_3 F(\Delta_1 G(v;x),\Delta_1 G(v;x), \CR_1 G(x))\\
	&\nonumber \quad \oplus D_1^1D_1^2\CR_4 F(\Delta_1 G(v;x),\Delta_1 G(v;x),\CR_1 G(x),G(0))
	\oplus D_1^1\CR_2F(\Delta_2 G(w,v;x),G(0))\\
	&\nonumber \quad \oplus D_1^1\CR_2F(\Delta_2 G(w,v;x),\CR_1 G(x))
	\oplus D_1^1\CR_3F(\Delta_2 G(w,v;x), \CR_1 G(x), G(0)).
	\end{align}
	
	We will further expand the right-hand side by working with each of the eight summands of \eqref{RHS8Summands} individually.  Note that by using Lemma \ref{nabla1} and Lemma \ref{nabla2}, we can rewrite $\Delta_1G(v;x)$ and $\Delta_2G(w,v;x)$ respectively as 
	\[\Delta_1G(v;x) \simeq D_1 G(v)\oplus D_1^1\CR_2 G(v,x),\]
	and
	\[\Delta_2 G(w,v;x)\simeq D_1 G(w)\oplus D_1^1D_1^2\CR_2G(v,v)\oplus D_1^1D_1^2\CR_3 G(v,v,x)\oplus D_1^1\CR_2G(w,x).\]
	
	Applying these reformulations of $\Delta_1G(v;x)$ and $\Delta_2G(w,v;x)$ as well as Lemma \ref{lem:D1-linearity}, the first summand of \eqref{RHS8Summands} is
	\begin{align*}
	D_1^1 & F(\Delta_2 G(w,v;x))\\
	&\simeq D_1^1 F\left( D_1 G(w)\oplus D_1^1D_1^2\CR_2G(v,v)\oplus D_1^1D_1^2\CR_3 G(v,v,x)\oplus D_1^1\CR_2G(w,x)\right)\\
	&\simeq D_1^1 F\left( D_1 G(w)\right)
	\oplus D_1^1 F\left(D_1^1D_1^2\CR_2G(v,v)\right)\\
	&\quad \oplus D_1^1 F\left(D_1^1D_1^2\CR_3 G(v,v,x)\right)\oplus D_1^1 F\left(D_1^1\CR_2G(w,x)\right),
	\end{align*}
	the second summand of (\ref{RHS8Summands}) is
	\begin{align*} 
	D_1^1&D_1^2\CR_2 F (\Delta_1 G(v;x),\Delta_1 G(v;x))\\
	&\simeq D_1^1 D_1^2\CR_2 F\left(D_1 G(v)\oplus D_1^1\CR_2 G(v,x),D_1 G(v)\oplus D_1^1\CR_2 G(v,x)\right)\\
	& \simeq D_1^1 D_1^2\CR_2 F\left(D_1 G(v),D_1 G(v)\oplus D_1^1\CR_2 G(v,x)\right)\\
	&\quad \oplus D_1^1 D_1^2\CR_2 F\left(D_1^1\CR_2 G(v,x),D_1 G(v)\oplus D_1^1\CR_2 G(v,x)\right)\\
	& \simeq D_1^1 D_1^2\CR_2 F\left(D_1 G(v), D_1 G(v)\right)\oplus D_1^1 D_1^2\CR_2 F\left(D_1 G(v),D_1^1\CR_2 G(v,x)\right)\\
	&\quad \oplus D_1^1 D_1^2\CR_2 F\left(D_1^1\CR_2 G(v,x),D_1 G(v)\right)\oplus D_1^1 D_1^2\CR_2 F\left(D_1^1\CR_2 G(v,x),D_1^1\CR_2 G(v,x)\right),
	\end{align*}
	the third summand of (\ref{RHS8Summands}) is
	\begin{align*} 
	D_1^1&D_1^2  \CR_3 F\left( \Delta_1 G(v;x),\Delta_1 G(v;x), G(0)\right)\\
	&\simeq D_1^1D_1^2 \CR_3 F\left(D_1 G(v)\oplus D_1^1\CR_2 G(v,x),D_1 G(v)\oplus D_1^1\CR_2 G(v,x), G(0)\right)\\
	&\simeq D_1^1D_1^2 \CR_3 F\left(D_1 G(v),D_1 G(v)\oplus D_1^1\CR_2 G(v,x), G(0)\right)\\
	&\quad \oplus D_1^1D_1^2 \CR_3 F\left(D_1^1\CR_2 G(v,x),D_1 G(v)\oplus D_1^1\CR_2 G(v,x), G(0)\right)\\
	&\simeq D_1^1D_1^2 \CR_3 F\Big(D_1 G(v),D_1 G(v), G(0)\Big) 
	\oplus D_1^1D_1^2 \CR_3 F\left(D_1 G(v), D_1^1\CR_2 G(v,x), G(0)\right)\\
	&\quad \oplus D_1^1D_1^2 \CR_3 F\big(D_1^1\CR_2 G(v,x),D_1 G(v), G(0)\big) \\
	&\quad \oplus D_1^1D_1^2 \CR_3 F\left(D_1^1\CR_2 G(v,x),D_1^1\CR_2 G(v,x), G(0)\right),
	\end{align*}
	the fourth summand of (\ref{RHS8Summands}) is
	\begin{align*} 
	D_1^1&D_1^2 \CR_3 F\left(\Delta_1 G(v;x),\Delta_1 G(v;x), \CR_1 G(x)\right)\\
	&\simeq D_1^1D_1^2 \CR_3 F\left(D_1 G(v)\oplus D_1^1\CR_2 G(v,x),D_1 G(v)\oplus D_1^1\CR_2 G(v,x), \CR_1 G(x)\right)\\
	&\simeq D_1^1D_1^2 \CR_3 F\left(D_1 G(v),D_1 G(v)\oplus D_1^1\CR_2 G(v,x), \CR_1 G(x)\right)\\
	&\quad\oplus D_1^1D_1^2 \CR_3 F\left(D_1^1\CR_2 G(v,x),D_1 G(v)\oplus D_1^1\CR_2 G(v,x), \CR_1 G(x)\right)\\
	&\simeq D_1^1D_1^2 \CR_3 F(D_1 G(v),D_1 G(v), \CR_1 G(x)) 
	\oplus D_1^1D_1^2 \CR_3 F(D_1 G(v), D_1^1\CR_2 G(v,x), \CR_1 G(x))\\
	&\quad \oplus D_1^1D_1^2 \CR_3 F(D_1^1\CR_2 G(v,x),D_1 G(v), \CR_1 G(x))\\
	& \quad \oplus D_1^1D_1^2 \CR_3 F(D_1^1\CR_2 G(v,x),D_1^1\CR_2 G(v,x), \CR_1 G(x)),
	\end{align*}
	the fifth summand of (\ref{RHS8Summands}) is
	\begin{align*} 
	D_1^1&D_1^2 \CR_4 F(\Delta_1 G(v;x),\Delta_1 G(v;x),\CR_1 G(x),G(0))\\
	&\simeq D_1^1D_1^2 \CR_4 F\left(D_1 G(v)\oplus D_1^1\CR_2 G(v,x),D_1 G(v)\oplus D_1^1\CR_2 G(v,x), \CR_1 G(x),G(0)\right)\\
	& \simeq D_1^1D_1^2 \CR_4 F\left(D_1 G(v),D_1 G(v)\oplus D_1^1\CR_2 G(v,x), \CR_1 G(x),G(0)\right)\\
	&\quad\oplus D_1^1D_1^2 \CR_4 F\left( D_1^1\CR_2 G(v,x),D_1 G(v)\oplus D_1^1\CR_2 G(v,x), \CR_1 G(x),G(0)\right)\\
	&\simeq D_1^1D_1^2 \CR_4 F(D_1 G(v),D_1 G(v),\CR_1 G(x),G(0))\\
	& \quad \oplus D_1^1D_1^2 \CR_4 F(D_1 G(v),D_1^1\CR_2 G(v,x),\CR_1 G(x),G(0))\\
	&\quad \oplus D_1^1D_1^2 \CR_4 F( D_1^1\CR_2 G(v,x),D_1 G(v),\CR_1 G(x),G(0))\\
	&\quad \oplus D_1^1D_1^2 \CR_4 F(D_1^1\CR_2 G(v,x),D_1^1\CR_2 G(v,x), \CR_1 G(x),G(0)),
	\end{align*}
	the sixth summand of (\ref{RHS8Summands}) is
	\begin{align*} 
	D_1^1 & \CR_2F(\Delta_2 G(w,v;x),G(0))\\
	\simeq & D_1^1 \CR_2F(D_1 G(w)\oplus D_1^1\CR_2 G(w,x)\oplus D_1^1 D_1^2\CR_2 G(v,v)\oplus D_1^1D_1^2\CR_3 G(v,v,x),G(0))\\
	\simeq & D_1^1 \CR_2F(D_1 G(w),G(0)) \oplus D_1^1 \CR_2F(D_1^1\CR_2 G(w,x),G(0))\\
	& \oplus D_1^1 \CR_2F(D_1^1 D_1^2\CR_2 G(v,v),G(0)) \oplus D_1^1 \CR_2F(D_1^1D_1^2\CR_3 G(v,v,x),G(0)),
	\end{align*}
	the seventh summand of (\ref{RHS8Summands}) is
	\begin{align*} 
	D_1^1& \CR_2F(\Delta_2 G(w,v;x),\CR_1 G(x))\\
	&\simeq D_1^1\CR_2 F(D_1G(w)\oplus D_1^1\CR_2 G(w,x)\oplus D_1^1D_1^2\CR_2 G(v,v)\oplus D_1^1D_1^2\CR_3 G(v,v,x),\CR_1 G(x))\\
	& \simeq  D_1^1 \CR_2F(D_1 G(w),\CR_1 G(x)) \oplus D_1^1 \CR_2F(D_1^1\CR_2 G(w,x),\CR_1 G(x))\\
	&\quad \oplus D_1^1 \CR_2F(D_1^1 D_1^2\CR_2 G(v,v),\CR_1 G(x)) \oplus D_1^1 \CR_2F(D_1^1D_1^2\CR_3 G(v,v,x),\CR_1 G(x)),
	\end{align*}
	and the eighth summand of (\ref{RHS8Summands}) is
	\begin{align*} 
	D_1^1& \CR_3F(\Delta_2 G(w,v;x),\CR_1 G(x),G(0))\\
	&\simeq D_1^1 \CR_3F(D_1G(w)\oplus D_1^1\CR_2 G(w,x)\oplus D_1^1D_1^2\CR_2 G(v,v)\\
	&\quad\oplus D_1^1D_1^2\CR_3 G(v,v,x), \CR_1 G(x),G(0))\\
	&\simeq D_1^1 \CR_3F(D_1G(w), \CR_1 G(x), G(0))\oplus D_1^1 \CR_3F(D_1^1\CR_2 G(w,x), \CR_1 G(x), G(0))\\
	&\quad \oplus D_1^1 \CR_3F(D_1^1D_1^2\CR_2 G(v,v), \CR_1 G(x),G(0))\\
	&\quad \oplus D_1^1 \CR_3F(D_1^1D_1^2\CR_3 G(v,v,x), \CR_1 G(x), G(0)).
	\end{align*}
	
	
	Putting all of these expansions together, the  expansion for the right-hand side is
	\begin{align}
	\nonumber \Delta_2F&(\Delta_2G(w,v;x),\Delta_1G(v;x);G(x)) \simeq  \\ 
	\label{B1} & D_1F( D_1 G(w))\\  
	& \label{B8} \oplus D_1 F(D_1^1D_1^2\CR_2G(v,v))\\
	&\label{B12} \oplus D_1 F(D_1^1D_1^2\CR_3 G(v,v,x))\\ 
	& \label{B2} \oplus D_1 F(D_1^1\CR_2G(w,x))\\
	&\label{B10} \oplus D_1^1 D_1^2\CR_2 F\left(D_1 G(v), D_1 G(v)\right)\\ 
	&\label{B16} \oplus D_1^1 D_1^2\CR_2 F\left(D_1 G(v),D_1^1\CR_2 G(v,x)\right)\\
	&\label{B17} \oplus D_1^1 D_1^2\CR_2 F\left(D_1^1\CR_2 G(v,x),D_1 G(v)\right)\\
	&\label{B18} \oplus D_1^1 D_1^2\CR_2 F\left(D_1^1\CR_2 G(v,x),D_1^1\CR_2 G(v,x)\right)\\
	&\label{B11} \oplus D_1^1D_1^2 \CR_3 F(D_1 G(v),D_1 G(v), G(0))\\ 
	&\label{B21} \oplus D_1^1D_1^2 \CR_3 F(D_1 G(v), D_1^1\CR_2 G(v,x), G(0))\\
	&\label{B22}  \oplus D_1^1D_1^2 \CR_3 F(D_1^1\CR_2 G(v,x),D_1 G(v), G(0))\\
	&\label{B23} \oplus D_1^1D_1^2 \CR_3 F(D_1^1\CR_2 G(v,x),D_1^1\CR_2 G(v,x), G(0))\\
	&\label{B24} \oplus D_1^1D_1^2 \CR_3 F(D_1 G(v),D_1 G(v), \CR_1 G(x))\\ 
	&\label{B25} \oplus D_1^1D_1^2 \CR_3 F(D_1 G(v), D_1^1\CR_2 G(v,x), \CR_1 G(x))\\
	&\label{B26}  \oplus D_1^1D_1^2 \CR_3 F(D_1^1\CR_2 G(v,x),D_1 G(v), \CR_1 G(x))\\
	&\label{B27} \oplus D_1^1D_1^2 \CR_3 F(D_1^1\CR_2 G(v,x),D_1^1\CR_2 G(v,x), \CR_1 G(x))\\
	&\label{B28} \oplus D_1^1D_1^2 \CR_4 F(D_1 G(v),D_1 G(v),\CR_1 G(x),G(0))\\
	&\label{B29} \oplus D_1^1D_1^2 \CR_4 F(D_1 G(v),D_1^1\CR_2 G(v,x),\CR_1 G(x),G(0))\\
	&\label{B30} \oplus D_1^1D_1^2 \CR_4 F( D_1^1\CR_2 G(v,x),D_1 G(v),\CR_1 G(x),G(0))\\
	&\label{B31} \oplus D_1^1D_1^2 \CR_4 F(D_1^1\CR_2 G(v,x),D_1^1\CR_2 G(v,x), \CR_1 G(x),G(0))\\
	&\label{C1} \oplus D_1^1 \CR_2F(D_1 G(w),G(0))\\ 
	&\label{B3} \oplus D_1^1 \CR_2F(D_1^1\CR_2 G(w,x),G(0))\\
	&\label{B9} \oplus D_1^1 \CR_2F(D_1^1 D_1^2\CR_2 G(v,v),G(0))\\ 
	&\label{B13} \oplus D_1^1 \CR_2F(D_1^1D_1^2\CR_3 G(v,v,x),G(0))\\
	&\label{B4} \oplus D_1^1 \CR_2F(D_1 G(w),\CR_1 G(x))\\ 
	&\label{B5} \oplus D_1^1 \CR_2F(D_1^1\CR_2 G(w,x),\CR_1 G(x))\\
	&\label{B14} \oplus D_1^1 \CR_2F(D_1^1 D_1^2\CR_2 G(v,v),\CR_1 G(x))\\
	&\label{B15} \oplus D_1^1 \CR_2F(D_1^1D_1^2\CR_3 G(v,v,x),\CR_1 G(x))\\
	&\label{B6} \oplus D_1^1 \CR_3F(D_1G(w), \CR_1 G(x), G(0))\\ 
	& \label{B7} \oplus D_1^1 \CR_3F(D_1^1\CR_2 G(w,x), \CR_1 G(x), G(0))\\
	&\label{B19} \oplus D_1^1 \CR_3F(D_1^1D_1^2\CR_2 G(v,v), \CR_1 G(x), G(0))\\
	&\label{B20} \oplus D_1^1 \CR_3F(D_1^1D_1^2\CR_3 G(v,v,x), \CR_1 G(x), G(0)).
	\end{align}
	
	
	\section{Proof of the chain rule for the second directional derivative}\label{proof}
	
	All of the key pieces to prove Theorem \ref{DirectionalChainRule} are built.  Specifically, we have expanded the left-hand side of Theorem \ref{DirectionalChainRule} in Section \ref{LHS}, and we have expanded the right-hand side in Section \ref{RHS}.  
	
	In the proof, we will use two cases of the chain rule for abelian funtors. First, there is a chain rule for $D_1$ if the interior functor is reduced.
	\begin{lem}\label{reducedChainRule}\cite[Proposition 5.7]{BJORT}
		If $G:\cA\rightarrow \cB$ and $F:\cB \rightarrow \cC$  are composable abelian functors and $G$ is a reduced functor, then there is a chain homotopy equivalence 
		\[D_1(F\circ G)(x)\simeq D_1 F\circ D_1 G(x).\]
	\end{lem}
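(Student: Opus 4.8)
The plan is to deduce this reduced chain rule as the basepoint-zero specialization of the first directional derivative chain rule \cite[Theorem 6.5(v)]{BJORT}, which is already available to us. The central observation is that the linearization $D_1F$ is recovered from the directional derivative $\nabla F$ by setting the basepoint to $0$: since $0 \oplus - \cong \id_{\cA}$ and $D_1$ respects natural isomorphisms, Definition \ref{defn:dir-derivative} gives $\nabla F(z; 0) = D_1 F(0 \oplus -)(z) \cong D_1 F(z)$ for every object $z$, and likewise for $G$ and for $F \circ G$. In particular the left-hand side $D_1(F \circ G)(z)$ is exactly $\Delta_1(F \circ G)(z; 0)$.

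First I would apply \cite[Theorem 6.5(v)]{BJORT} at basepoint $x = 0$, which yields a chain homotopy equivalence
\[ D_1(F \circ G)(z) = \Delta_1(F \circ G)(z; 0) \simeq \Delta_1 F\bigl(\Delta_1 G(z; 0);\, G(0)\bigr) = \nabla F\bigl(D_1 G(z);\, G(0)\bigr), \]
where the last equality uses the preceding paragraph to rewrite $\Delta_1 G(z;0) \simeq D_1 G(z)$. It then remains to identify the right-hand side with $(D_1 F \circ D_1 G)(z)$.

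The key step is to move the basepoint from $G(0)$ to $0$ using the hypothesis that $G$ is reduced. By definition $\nabla F(u; G(0)) = D_1 F(G(0) \oplus -)(u)$, and reducedness gives $G(0) \simeq 0$. Since $\oplus$ preserves chain homotopy equivalences and $D_1$ is well defined on chain homotopy equivalence classes (Lemma \ref{lem:D1-linearity}), applying $D_1 F$ to the induced equivalence of functors $G(0) \oplus - \simeq 0 \oplus - \cong \id$ produces $D_1 F(G(0) \oplus -) \simeq D_1 F(0 \oplus -) \cong D_1 F$. Hence
\[ \nabla F\bigl(D_1 G(z); G(0)\bigr) \simeq \nabla F\bigl(D_1 G(z); 0\bigr) = D_1 F\bigl(D_1 G(z)\bigr) = (D_1 F \circ D_1 G)(z), \]
the last identification being precisely the Dold--Kan composite of Section \ref{CrSection} evaluated at $z$ (recall that the substitution of the chain complex $D_1 G(z)$ into $D_1 F$ is interpreted through that composition). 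Renaming $z$ as $x$ gives the claim.

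The main obstacle I anticipate is the basepoint change in the last display: one must check that $\nabla F(u; -)$, regarded as a functor of its basepoint, carries the reducedness equivalence $G(0) \simeq 0$ to a chain homotopy equivalence, and that this is compatible with the direction variable $u = D_1 G(z)$ being itself a chain complex, so that the comparison takes place inside the Kleisli/Dold--Kan composition rather than naively objectwise. This is the only point where the precise meaning of $G(0) \simeq 0$ (and the distinction between \emph{reduced} and \emph{strictly reduced}) must be tracked carefully; everything else is bookkeeping with the identity $0 \oplus - \cong \id$ and the functoriality of $D_1$. A more self-contained but considerably longer route would avoid Theorem 6.5(v) altogether and instead expand $\CR_n(F \circ G)$ via Lemma \ref{JMProp1.6}, discarding the summands that become contractible because $G$ is reduced and cross effects are multi-reduced, exactly as in Examples \ref{CompositionEx} and \ref{diag0ex}; there the obstacle is the full cross-effect bookkeeping, which is what the remainder of this paper carries out in the $\Delta_2$ case.
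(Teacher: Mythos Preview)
The paper does not give its own proof of this lemma: it is imported verbatim from \cite[Proposition~5.7]{BJORT} and used as a black box in Section~\ref{proof}. So there is no in-paper argument to compare against.

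That said, your proposed argument has a genuine gap: it is circular. In \cite{BJORT} the directional derivative is \emph{defined} in terms of $D_1$ via $\nabla F(v;x)=D_1F(x\oplus -)(v)$, and Theorem~6.5(v) there is obtained by applying the $D_1$ chain rule (Propositions~5.7/5.10) to the composite $F\circ G(x\oplus -)$, after replacing $G(x\oplus -)$ by its reduced part. In other words, the implication runs from Proposition~5.7 to Theorem~6.5(v), not the other way. Invoking 6.5(v) to establish 5.7 therefore assumes what you are trying to prove. Your own derivation makes this visible: the step where you pass from $\nabla F\bigl(D_1G(z);G(0)\bigr)$ to $D_1F\bigl(D_1G(z)\bigr)$ is exactly the content of the $D_1$ chain rule for the reduced inner functor, just with the basepoint shifted.

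A non-circular route that stays within this paper's toolkit is to specialize Lemma~\ref{D1ChainRule}: when $G$ is reduced one has $G(0)\cong 0$, and then the correction term $D_1^{x}\CR_2F(\CR_1G(x),G(0))$ vanishes by Proposition~\ref{multi-reduced}, leaving precisely $D_1F\circ D_1G(x)$. (This is essentially the ``self-contained'' alternative you sketch at the end, and it is much closer to how \cite{BJORT} actually proceeds.) If you want to keep the directional-derivative packaging, you would need an independent proof of Theorem~6.5(v) that does not already rest on Proposition~5.7; absent that, the argument does not stand on its own.
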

	There is also a chain rule for $D_1$ if the interior functor is not reduced, but an additional correction term is required.
	\begin{lem}\label{D1ChainRule} \cite[Proposition 5.10]{BJORT}
		If $G:\cA\rightarrow \cB$ and $F:\cB \rightarrow \cC$  are composable abelian functors, then there is a chain homotopy equivalence
		\[D_1(F\circ G)(x)\simeq D_1F\circ D_1G(x)\oplus D_1^x\CR_2F(\CR_1G(x),G(0)).\]
	\end{lem}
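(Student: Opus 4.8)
The plan is to deduce this unreduced chain rule from the reduced one (Lemma \ref{reducedChainRule}) by first splitting off the constant part of $G$. First I would apply the defining splitting of the first cross effect to $G$, namely $G(x) \cong G(0) \oplus \CR_1 G(x)$, and substitute this into $F$. Writing $F \cong F(0) \oplus \CR_1 F$ and then applying Lemma \ref{unreducedG} in the case $k = 1$ to the term $\CR_1 F(G(x))$ produces a natural four-term direct sum decomposition
\[ F\circ G(x) \cong F(0) \oplus \CR_1 F(G(0)) \oplus \CR_1 F(\CR_1 G(x)) \oplus \CR_2 F(G(0),\CR_1 G(x)). \]

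Next I would apply $D_1$ to both sides and distribute it over the biproduct using the additivity of $D_1$ recorded in Lemma \ref{lem:D1-linearity}(2). The first two summands, $F(0)$ and $\CR_1 F(G(0))$, are constant functors of $x$, so their first cross effects (and hence all the building blocks of $D_1$) vanish and their linearizations are the zero complex. The third summand is the composite $\CR_1 F \circ \CR_1 G$, whose inner functor $\CR_1 G$ is reduced; applying the reduced chain rule (Lemma \ref{reducedChainRule}) gives $D_1(\CR_1 F \circ \CR_1 G)(x) \simeq D_1 \CR_1 F \circ D_1 \CR_1 G(x)$. The fourth summand is precisely $D_1^x \CR_2 F(G(0),\CR_1 G(x))$, which agrees with the correction term $D_1^x \CR_2 F(\CR_1 G(x), G(0))$ of the statement because $\CR_2$ is symmetric in its two inputs.

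To finish, I would verify the auxiliary fact that $D_1$ depends only on the reduced part of a functor, i.e. $D_1 H \cong D_1 \CR_1 H$ for every abelian $H$. This rests on the identity $\CR_n H \cong \CR_n \CR_1 H$ for all $n \geq 1$, which follows from the recursive definition of cross effects together with $\CR_1 \CR_1 H = \CR_1 H$ (as $\CR_1 H$ is reduced). Consequently $(D_1 H)_0 = \CR_1 H = \CR_1 \CR_1 H = (D_1 \CR_1 H)_0$ and $C_2^{\times k} H = C_2^{\times k} \CR_1 H$ in each positive degree, so the two chain complexes coincide degreewise with the same differentials. Applying this with $H = F$ and $H = G$ rewrites the surviving composite as $D_1 F \circ D_1 G(x)$, and assembling the pieces yields the claimed equivalence.

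The main obstacle I expect is the passage from objectwise isomorphisms to chain homotopy equivalences of complexes: each splitting above must be natural in $x$ and compatible with the differentials, and the composite $D_1 \CR_1 F \circ D_1 \CR_1 G$ is formed through the Dold--Kan/Kleisli totalization, so one must check that the identification $D_1 H \cong D_1 \CR_1 H$ is respected by that composition procedure. The additivity clause of Lemma \ref{lem:D1-linearity}(2) and the reduced chain rule carry most of this weight; the care lies in confirming that the constant summands contribute genuinely contractible (indeed zero) complexes and that no information is lost in the totalization.
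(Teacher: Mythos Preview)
The paper does not supply its own proof of this lemma; it is simply imported from \cite[Proposition 5.10]{BJORT} and used as a black box in Section~\ref{proof}. So there is no in-paper argument to compare against.

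That said, your proposal is a correct and natural derivation using only results already stated in this paper. The four-term splitting of $F\circ G$ is exactly what Lemma~\ref{unreducedG} with $k=1$ gives, the additivity clause Lemma~\ref{lem:D1-linearity}(ii) lets you push $D_1$ across the biproduct, the two constant summands have vanishing cross effects and hence zero $D_1$, and the reduced chain rule (Lemma~\ref{reducedChainRule}) handles $\CR_1 F\circ \CR_1 G$. Your auxiliary identity $D_1 H \cong D_1\CR_1 H$ is precisely Observation~\ref{D1CR1Trick}, which the paper states and proves independently a few lines later; you could simply cite it rather than reprove it. The symmetry of $\CR_2 F$ in its two arguments is standard and justifies swapping $G(0)$ and $\CR_1 G(x)$ in the correction term.

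Your closing caveat about naturality and the Kleisli composition is well placed but not a genuine obstruction here: the cross-effect splittings of Definition~\ref{def:CR} are natural isomorphisms of functors, so the four-term decomposition holds in $\mathrm{Fun}(\cA,\cB)$ before any linearization, and $D_1$ is applied to each summand as a functor of $x$. The identification $D_1\CR_1 H = D_1 H$ is a degreewise equality of chain complexes with identical differentials, so it transports through the Dold--Kan/totalization procedure without difficulty.
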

	
	We make a few observations concerning $D_1$.
	\begin{obs}\label{D1CR1Trick}
		Let $F$ be an abelian functor. The linearization of $\CR_1F$ is chain homotopic to the linearization of $F$. In other words,
		\[D_1\CR_1 F(x)\simeq D_1 F(x).\]
	\end{obs}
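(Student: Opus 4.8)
The plan is to reduce the statement to the additivity of the linearization functor. Writing $\underline{F(0)}$ for the constant functor at the object $F(0)$, the defining isomorphism of the first cross effect in Definition \ref{def:CR} gives a natural isomorphism of functors
\[ F \cong \underline{F(0)} \oplus \CR_1 F. \]
Applying the additivity of $D_1$ recorded in Lemma \ref{lem:D1-linearity}(2) to this decomposition yields
\[ D_1 F \cong D_1 \underline{F(0)} \oplus D_1 \CR_1 F, \]
so it suffices to prove that the linearization of the constant functor $\underline{F(0)}$ is contractible.

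To establish this, I would compute the cross effects of a constant functor $\underline{c}$ directly from Definition \ref{def:CR}. Since the natural map $\underline{c}(x) \to \underline{c}(0)$ is the identity, the defining splitting forces $\CR_1 \underline{c} \cong 0$; feeding this into the recursion $\CR_1 \underline{c}(x_1 \oplus x_2) \cong \CR_1 \underline{c}(x_1) \oplus \CR_1 \underline{c}(x_2) \oplus \CR_2 \underline{c}(x_1,x_2)$ then forces $\CR_2 \underline{c} \cong 0$, and hence $C_2 \underline{c} = \CR_2 \underline{c} \circ diag \cong 0$. Consulting the explicit chain complex of Definition \ref{def:D1}, every chain group of $D_1 \underline{c}$ vanishes: the degree-zero term is $\CR_1 \underline{c} \cong 0$, while each higher term is $C_2^{\times k} \underline{c} \cong 0$. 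Thus $D_1 \underline{F(0)}$ is the zero complex, and in particular contractible.

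Combining the two displays gives $D_1 F \cong D_1 \CR_1 F$, which in particular is the desired chain homotopy equivalence $D_1 \CR_1 F(x) \simeq D_1 F(x)$. I expect the only real subtlety to be the middle step: the additivity statement in Lemma \ref{lem:D1-linearity}(2) records $D_1 0 \cong 0$ only for the zero functor, whereas here the extra summand is the constant functor at $F(0)$, which is generally not the zero functor. Establishing that its linearization nonetheless vanishes — via the cross-effect computation above, which uses that $\CR_1$ and $C_2$ annihilate constant functors — is the crux of the argument; the remaining steps are formal consequences of Lemma \ref{lem:D1-linearity} and Definition \ref{def:CR}.
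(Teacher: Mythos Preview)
Your argument is correct, but it takes a different route from the paper. The paper proceeds by directly identifying the chain complexes: it shows $\CR_1(\CR_1 F)\cong \CR_1 F$ and $\CR_2(\CR_1 F)\cong \CR_2 F$, so that every level $(D_1\CR_1 F)_k$ agrees with $(D_1 F)_k$ from Definition~\ref{def:D1}. Your approach instead splits off the constant summand $\underline{F(0)}$, invokes the additivity of $D_1$ from Lemma~\ref{lem:D1-linearity}(ii), and then kills $D_1\underline{F(0)}$ by computing that constant functors have vanishing cross effects. Both arguments are short and elementary; the paper's is slightly more self-contained in that it never appeals to Lemma~\ref{lem:D1-linearity}, while yours is a bit more conceptual and makes transparent \emph{why} the result holds---namely, that $D_1$ is additive and annihilates constants. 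Your own flagged subtlety (that Lemma~\ref{lem:D1-linearity}(ii) records $D_1 0\cong 0$ only for the zero functor) is correctly resolved by your cross-effect computation for $\underline{c}$.
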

	\begin{proof}
		
		Recall that $\CR_1 F(0)\cong 0$ because cross effects are multi-reduced. 
		In order to compute $\CR_1(\CR_1 F(-))(x)$, we consider the definition of the first cross effect of the functor $cr_1F$:
		\[\CR_1(\CR_1 F)(x)\oplus\CR_1 F(0)\cong\CR_1 F(x),\]
		and thus $\CR_1\CR_1 F(x)\cong \CR_1 F(x)$. This further implies that $\CR_2(\CR_1 F(-))(x,y)\cong\CR_2 F(x,y)$. 
		
		Recalling the definition of the linearization of $F$, observe that when we linearize $F(x)$ and $\CR_1 F(x)$, we construct equivalent complexes. 
	\end{proof}

	\begin{obs}\label{ObsLinearIsLinear}
		Let $F$ be a functor between abelian categories. The linearization of $D_1F$ is chain homotopy equivalent to the linearization of $F$. In other words,
		\[ D_1 D_1 F(x)\simeq D_1 F(x).\]
	\end{obs}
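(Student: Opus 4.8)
The plan is to establish that $D_1$ is idempotent up to chain homotopy equivalence, by exploiting the fact—already recorded in Lemma \ref{lem:D1-linearity}(1)—that $D_1F$ is itself a linear functor. Concretely, I would show that linearizing an already-linear functor $L$ returns $L$ up to equivalence, and then specialize to $L = D_1F$. The mechanism is to unwind Definition \ref{def:D1} for the functor $D_1(D_1F)$ and check that its degree-$0$ part recovers $D_1F$ while every positive-degree part is contractible.

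First I would read off the terms of $D_1(D_1F)$ from Definition \ref{def:D1}: in chain degree $0$ it is $\CR_1(D_1F)$, and in chain degree $k \geq 1$ it is $C_2^{\times k}(D_1F) = (\CR_2\circ diag)^{\times k}(D_1F)$. For the degree-$0$ term, Lemma \ref{lem:D1-linearity}(1) guarantees that $D_1F$ is strictly reduced, so the computation used in the proof of Observation \ref{D1CR1Trick}—namely $\CR_1\CR_1(-)\cong\CR_1(-)$ together with $\CR_1 L \simeq L$ when $L(0)\simeq 0$—gives $\CR_1(D_1F)\simeq D_1F$. For the positive-degree terms, Lemma \ref{lem:D1-linearity}(1) also states that $D_1F$ is linear, i.e. $\CR_2(D_1F)\simeq 0$; hence $C_2(D_1F)=\CR_2(D_1F)\circ diag \simeq 0$, and since $C_2$ preserves chain homotopy equivalences and sends $0$ to $0$ (by multi-reducedness), every iterate $C_2^{\times k}(D_1F)$ with $k\geq 1$ is contractible.

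Finally I would assemble these computations: $D_1(D_1F)$ is a chain complex whose degree-$0$ part is equivalent to $D_1F$ and whose every higher part is contractible, so the whole complex is equivalent to $D_1F$, which is the assertion $D_1D_1F(x)\simeq D_1F(x)$. The step I expect to be the main obstacle is precisely this last collapse. Because $D_1F$ is itself chain-complex-valued, the composite $D_1(D_1F)$ is formed through the Dold--Kan totalization of Section 3 of \cite{BJORT}, so the totalization interleaves the internal differential of $D_1F$ with the linearization differential $\sum_{i}(-1)^i C_2^{\times i}\epsilon$. Making rigorous that the contractible positive-degree terms contribute nothing to this totalization requires a filtration or spectral-sequence argument on the associated double complex, rather than the naive observation that a complex concentrated (up to contractibility) in degree $0$ is equivalent to its degree-$0$ term; controlling this bookkeeping is the technical heart of the proof.
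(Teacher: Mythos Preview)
Your proposal is correct and follows essentially the same approach as the paper's proof: both use Lemma~\ref{lem:D1-linearity}(1) to conclude that $\CR_1(D_1F)\simeq D_1F$ in degree~$0$ and $\CR_2(D_1F)\simeq 0$ (hence $C_2^{\times k}(D_1F)\simeq 0$) in positive degrees, and then read off the chain complex of Definition~\ref{def:D1}. The paper's argument is shorter only because it does not pause over the totalization issue you flag in your final paragraph; that caution is well-placed but does not represent a divergence in strategy.
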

	\begin{proof}
		Recall that $D_1 F$ is reduced and degree 1. It follows that $\CR_1 D_1 F(x) \simeq D_1 F(x)$ and $\CR_2 D_1 F(x,y) \simeq 0$. If we linearize $D_1 F$, then we have
		\[ (D_1 D_1F)_k \coloneqq \begin{cases} 
		0& k\geq 1\\
		D_1 F & k=0 \\
		0 & \text{otherwise,}\end{cases}\]
		which is equivalent to $D_1 F$. 
	\end{proof}
	
	Now, we proceed with the proof of the main theorem.	
	\renewcommand{\thethm}{\ref{DirectionalChainRule}}
	\begin{thm}
		Given two composable abelian functors $G:\cA \rightarrow \cB$ and $F:\cB \rightarrow \cC$ with object $x$, $v$, and $w$ in $\cA$,  there is a chain homotopy equivalence  
		\begin{equation*}\Delta_2(F\circ G)(w,v;x)\simeq \Delta_2F(\Delta_2G(w,v;x),\Delta_1G(v;x);G(x)).\end{equation*}
	\end{thm}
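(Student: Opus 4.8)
The plan is to prove the equivalence exactly by the term-by-term matching announced in the introduction. Since chain homotopy equivalence is preserved under biproducts, it suffices to exhibit a correspondence between the summands of the left-hand side and those of the right-hand side under which paired summands are chain homotopy equivalent. Both sides have already been reduced to direct sums of linearizations of composites of cross effects in Sections \ref{LHS} and \ref{RHS}, so no further structural input is required beyond a dictionary translating the two normal forms into one another.

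First I would dispose of the one left-hand summand that is not yet in component form, namely (\ref{A1}) $=D_1(F\circ G)(w)$, whose treatment was deferred. Applying the unreduced chain rule, Lemma \ref{D1ChainRule}, splits it as
\[ D_1(F\circ G)(w)\simeq D_1F\circ D_1G(w)\oplus D_1^{w}\CR_2F(\CR_1G(w),G(0)). \]
The first summand is exactly (\ref{B1}). For the second, I would use that $\CR_1G$ and $D_1G$ are both reduced, together with the reduced chain rule (Lemma \ref{reducedChainRule}) and the normalizations $D_1\CR_1G\simeq D_1G$ (Observation \ref{D1CR1Trick}) and $D_1D_1G\simeq D_1G$ (Observation \ref{ObsLinearIsLinear}), to identify it with (\ref{C1}) $=D_1^{1}\CR_2F(D_1G(w),G(0))$. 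This is the only place the correction term of the unreduced chain rule enters, and it is what accounts for the extra right-hand summand (\ref{C1}) beyond the thirty terms (\ref{A2})--(\ref{A31}).

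For the remaining matching I would build the dictionary, whose moves are always the same. Whenever a cross effect of $G$, which is reduced in each of its variables, occupies a slot of $\CR_kF$, the reduced chain rule pulls the linearization $D_1^{\bullet}$ through that slot; then Observation \ref{D1CR1Trick} rewrites any $\CR_1G$ as $D_1G$ and Observation \ref{ObsLinearIsLinear} collapses any double linearization. This turns a left-hand summand such as (\ref{A2}) $=D_1^{w}\CR_1F(\CR_2G(w,x))$ into the right-hand form (\ref{B2}) $=D_1F(D_1^{1}\CR_2G(w,x))$, and the same recipe handles the $\CR_2F$, $\CR_3F$, and $\CR_4F$ blocks. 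Linearity of $D_1$ (Lemma \ref{lem:D1-linearity}) justifies distributing over the biproduct slots, while the symmetry of $\CR_kF$ in its arguments together with the identification $\bar v=v$ is what aligns mirror pairs such as (\ref{A16})/(\ref{A17}) with (\ref{B16})/(\ref{B17}). The $G(0)$-slots and $\CR_1G(x)$-slots produced on the right by Lemma \ref{unreducedG} match the corresponding slots on the left by inspection.

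The main obstacle is not any single deep step but the verification that this dictionary really gives a bijection: after (\ref{A1}) has been split off to account for (\ref{B1}) and (\ref{C1}), one must check that each of the thirty remaining summands (\ref{A2})--(\ref{A31}), once normalized, coincides with exactly one of (\ref{B2})--(\ref{B31}), and that none is reused. I would organize this as an explicit table grouped by the top cross-effect degree of $F$, checking within each block that the multisets of inner $G$-arguments agree after normalization; the bookkeeping around the repeated variable $v=\bar v$ and the symmetry of cross effects is the only place where a careless pairing could introduce an error.
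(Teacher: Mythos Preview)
Your proposal is correct and matches the paper's own proof essentially step for step: the paper also treats (\ref{A1}) via the unreduced chain rule to produce (\ref{B1})$\oplus$(\ref{C1}), and then pairs each of (\ref{A2})--(\ref{A31}) with the corresponding (\ref{B2})--(\ref{B31}) using exactly the tools you name (Lemmas \ref{reducedChainRule}, \ref{D1ChainRule}, and Observations \ref{D1CR1Trick}, \ref{ObsLinearIsLinear}). The only cosmetic difference is that the paper groups the thirty pairings into five ``Types'' according to which combination of these lemmas is invoked, rather than by the degree of the outer cross effect of $F$ as you propose; either bookkeeping scheme works.
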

	\addtocounter{thm}{-1}
	
	\begin{proof}
		
		We will show homotopy equivalence by matching the summands on the left-hand side (terms \eqref{A1} through \eqref{A31}) with their homotopy equivalents on the right-hand side (terms \eqref{B1} through \eqref{B20}). The justifications for equivalence between these terms are very similar. With this in mind, we will prove just one case of each type and list the remainder of the pairs of terms.

		\emph{Type 1:} $\eqref{A3} \simeq \eqref{B3}$. Only Lemma \ref{reducedChainRule} is needed. 
		\begin{align*}
		\eqref{A3} &=D_1^{w}\CR_2F(\CR_2G(w,x),G(0))\\
		&=D_1(\CR_2F(-,G(0))\circ \CR_2G(-,x))(w)\\
		&\simeq D_1^1\CR_2F(D_1^1\CR_2G(w,x),G(0))\\
		&=\eqref{B3}.
		\end{align*}
		The proofs of $\eqref{A5}\simeq \eqref{B5}$ and $\eqref{A7}\simeq \eqref{B7}$ are similar. 
		
		\emph{Type 2:} $\eqref{A2}\simeq \eqref{B2}$. Lemma \ref{reducedChainRule} is used, followed by Observation \ref{D1CR1Trick}.
		\begin{align*}
		\eqref{A2}&=D_1^w\CR_1F(\CR_2G(w,x))\\
		&=D_1(\CR_1F(-)\circ\CR_2G(-,x))(w)\\
		&\simeq D_1\CR_1F(-)\circ D_1\CR_2G(-,x)(w)\\
		&=D_1\CR_1F(D_1^1\CR_2G(w,x))\\
		&\simeq D_1F(D_1^1\CR_2G(w,x))\\
		&=\eqref{B2}.
		\end{align*}
		The proofs of $\eqref{A4} \simeq \eqref{B4}$ and $\eqref{A6}\simeq \eqref{B6}$ are similar.
		
		\emph{Type 3:} $\eqref{A1}\simeq \eqref{B1}\oplus\eqref{C1}$. Lemma \ref{D1ChainRule} is applied, followed by Lemma \ref{reducedChainRule} and Observation \ref{D1CR1Trick}.
		\begin{align*}
		\eqref{A1}&=D_1(F\circ G)(w)\\
		&\simeq D_1\circ D_1G(w)\oplus D_1^w\CR_2F(\CR_1G(w),G(0))\\
		&=D_1F(D_1G(w))\oplus D_1(\CR_2F(-,G(0))\circ \CR_1G(-))(w)\\
		&\simeq D_1F(D_1G(w)) \oplus D_1^1\CR_2F(D_1\CR_1G(w),G(0))\\
		&\simeq D_1F(D_1G(w)) \oplus D_1^2\CR_2F(D_1G(w), G(0))\\
		&=\eqref{B1}\oplus \eqref{C1}.
		\end{align*}
		
		\emph{Type 4:} $\eqref{A9}\simeq \eqref{B9}$. Lemma \ref{D1ChainRule} is applied twice, as well as Observation \ref{ObsLinearIsLinear}: 
		\begin{align*}
		\eqref{A9}&=D_1^vD_1^{\bar{v}}\CR_2F(\CR_2G(v,\bar{v}),G(0))\\
		&=D_1^v[D_1(\CR_2F(-,G(0))\circ \CR_2G(v,-))(\bar{v})]\\
		&\simeq D_1^v[D_1^1\CR_2F(D_1^2\CR_2G(v,\bar{v}),G(0))]\\
		&=D_1[D_1^1\CR_2F(-,G(0))\circ D_1^2\CR_2G(-,\bar{v})](v)\\
		&\simeq D_1^1D_1^1\CR_2F(D_1^1D_1^2\CR_2G(v,\bar{v}),G(0))\\
		&\simeq D_1^1\CR_2F(D_1^1D_1^2\CR_2G(v,\bar{v}),G(0))\\
		&=\eqref{B9}.
		\end{align*}
		The proofs of $\eqref{A13}\simeq \eqref{B13}$, $\eqref{A14}\simeq \eqref{B14}$, $\eqref{A15}\simeq\eqref{B15}$, $\eqref{A18}\simeq \eqref{B18}$, $\eqref{A19}\simeq \eqref{B19}$, $\eqref{A20}\simeq \eqref{B20}$, $\eqref{A23}\simeq \eqref{B23}$, $\eqref{A27}\simeq \eqref{B27}$, and $\eqref{A31}\simeq \eqref{B31}$ are similar. 
		
		\emph{Type 5:}  $\eqref{A8}\simeq \eqref{B8}$.  Lemma \ref{D1ChainRule} is applied twice, as well as Observations \ref{ObsLinearIsLinear} and \ref{D1CR1Trick}: 
		\begin{align*}
		\eqref{A8}&=D_1^vD_1^{\bar{v}}\CR_1F(\CR_2G(v,\bar{v})\\
		&=D_1^v[D_1(\CR_1F\circ \CR_2G(v,-))(\bar{v})]\\
		&\simeq D_1^v[D_1\CR_1F(D_1^2\CR_2G(v,\bar{v}))]\\
		&=D_1[D_1\CR_1F\circ D_1^2\CR_2G(-,\bar{v})](v)\\
		&\simeq D_1D_1\CR_1F(D_1^1D_1^2\CR_2G(v,\bar{v}))\\
		&\simeq D_1\CR_1F(D_1^1D_1^2\CR_2G(v,\bar{v}))\\
		&\simeq D_1F(D_1^1D_1^2\CR_2G(v,\bar{v}))\\
		&=\eqref{B8}.
		\end{align*}
		The proofs of $\eqref{A10}\simeq \eqref{B10}$, $\eqref{A11}\simeq \eqref{B11}$, $\eqref{A12}\simeq\eqref{B12}$, $\eqref{A16}\simeq \eqref{B16}$, $\eqref{A17}\simeq \eqref{B17}$, $\eqref{A21}\simeq \eqref{B21}$, $\eqref{A22}\simeq\eqref{B22}$, $\eqref{A24}\simeq\eqref{B24}$, $\eqref{A25}\simeq\eqref{B25}$, $\eqref{A26}\simeq\eqref{B26}$, $\eqref{A28}\simeq \eqref{B28}$, $\eqref{A29}\simeq \eqref{B29}$, and $\eqref{A30}\simeq \eqref{B30}$ are similar. 
		
	\end{proof}
	
	\section{Conclusion}
	
	We proved the chain rule formula for the second higher order directional derivative using primarily properties of linearization and cross effects. This result gave the authors of \cite{BJORT} hope that their definition of higher order directional derivatives of functors would produce a higher order directional derivative chain rule,
	\[\Delta_n(F\circ G)\left(v_n,\ldots,v_1;x_0\right)\simeq\Delta_nF\left( \Delta_nG\left(v_n,\ldots,v_1;x_0\right),\ldots, \Delta_1G\left(v_1;x_0\right);G\left(x_0\right)\right),\]
	which mirrors the analogous result for functions (see \cite[Theorem 3]{HMY}).
	The proof strategy used in this paper does not provide a clear inductive procedure that could lead to the more general result. 
	Thus more sophisticated machinery was developed to prove the higher order directional derivative chain rule for abelian functors \cite[Theorem 8.1]{BJORT}.

\end{document}